\setlist[itemize]{itemsep=0pt, topsep=1.5pt}
\setlist[enumerate]{itemsep=0pt, topsep=1.5pt}
\newtheorem{theorem}{Theorem}
\numberwithin{theorem}{section}
\newtheorem{proposition}[theorem]{Proposition}
\newtheorem{lemma}[theorem]{Lemma}
\newtheorem{corollary}[theorem]{Corollary}
\newtheorem{conjecture}[theorem]{Conjecture}
\newtheorem{problem}[theorem]{Problem}
\newtheorem*{chernoff}{Chernoff Bounds}
\newtheorem*{glll}{A General Local Lemma}
\newcommand{\Exp}{\,\mathbb{E}}
\renewcommand{\Pr}{\,\mathbb{P}}
\newcommand{\eps}{\varepsilon}
\DeclareMathOperator{\Bin}{Bin}
\renewcommand{\labelenumi}{\theenumi}
\title{Bipartite induced density in triangle-free graphs}
\author{
Wouter Cames van Batenburg
\thanks{Department of Computer Science, Universit\'e Libre de Bruxelles, Belgium. 
Email: \protect\href{mailto:wcamesva@ulb.ac.be}{\protect\nolinkurl{wcamesva@ulb.ac.be}}. Supported by an ARC grant from the Wallonia-Brussels Federation of Belgium.}
\and
R\'emi de Joannis de Verclos
\thanks{Department of Mathematics, Radboud University Nijmegen, Netherlands. 
Email: \protect\href{mailto:r.deverclos@math.ru.nl}{\protect\nolinkurl{r.deverclos@math.ru.nl}}. Supported by a Vidi grant (639.032.614) of the Netherlands Organisation for Scientific Research (NWO).}
\and
Ross J. Kang
\thanks{Department of Mathematics, Radboud University Nijmegen, Netherlands. 
Email: \protect\href{mailto:ross.kang@gmail.com}{\protect\nolinkurl{ross.kang@gmail.com}}. Supported by a Vidi grant (639.032.614) of the Netherlands Organisation for Scientific Research (NWO).}
\and
Fran\c{c}ois Pirot
\thanks{Department of Mathematics, Radboud University Nijmegen, Netherlands
and LORIA, Universit\'e de Lorraine, Nancy, France.
Email: \protect\href{mailto:francois.pirot@loria.fr}{\protect\nolinkurl{francois.pirot@loria.fr}}.}
}
\begin{document}

\date{}

\maketitle

\begin{abstract}
We prove that any triangle-free graph on $n$ vertices with minimum degree at least $d$ contains a bipartite induced subgraph of minimum degree at least $d^2/(2n)$. This is sharp up to a logarithmic factor in $n$.
Relatedly, we show that the fractional chromatic number of any such triangle-free graph is at most the minimum of $n/d$ and $(2+o(1))\sqrt{n/\log n}$ as $n\to\infty$.
This is sharp up to constant factors.
Similarly, we show that the list chromatic number of any such triangle-free graph is at most $O(\min\{\sqrt{n},(n\log n)/d\})$ as $n\to\infty$.

Relatedly, we also make two conjectures. First, any triangle-free graph on $n$ vertices has fractional chromatic number at most $(\sqrt{2}+o(1))\sqrt{n/\log n}$ as $n\to\infty$. Second, any  triangle-free graph on $n$ vertices has list chromatic number at most $O(\sqrt{n/\log n})$ as $n\to\infty$.

\smallskip
{\bf Keywords}: triangle-free graphs, bipartite induced density, fractional colouring.
{\bf MSC}: 05C35, 05C15
\end{abstract}

\section{Introduction}\label{sec:intro}

Our starting point is a conjecture of the third author together with Esperet and Thomass\'e, which would be sharp up to the choice of constant if true.
This enticing conjecture was inspired by a list colouring problem.

\begin{conjecture}[Esperet, Kang, and Thomass\'e~\cite{EKT19}]\label{conj:trianglebip}
There is a constant $C>0$ such that any triangle-free graph with minimum degree at least $d$ contains a bipartite induced subgraph of minimum degree at least $C\log d$.
\end{conjecture}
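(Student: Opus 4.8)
\medskip
\noindent\textbf{A plan of attack.}
Conjecture~\ref{conj:trianglebip} appears to be open, so what follows is the route we find most promising together with an indication of where the real difficulty lies. It is enough to find two \emph{disjoint} independent sets $A,B\subseteq V(G)$ such that every vertex of $A$ has at least $k:=C\log d$ neighbours in $B$ and every vertex of $B$ has at least $k$ neighbours in $A$: then $G[A\cup B]$ is bipartite with parts $A,B$ and has minimum degree at least $k$. A uniformly random $2$-colouring of $V(G)$ is hopeless even for triangle-free $G$, since for a vertex $v$ the numbers of properly coloured and of monochromatic edges at $v$ are both $(\tfrac12+o(1))\deg(v)$ with high probability, so deleting an endpoint of each monochromatic edge removes essentially everything. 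The triangle-free hypothesis has to enter more globally.

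\smallskip
\noindent\textbf{The engine.}
The plan is to build $A$ and $B$ from a well-chosen random independent set. First I would establish, or invoke, a \emph{local} Shearer-type statement: a probability distribution $\mu$ on the independent sets of $G$ with
\[
\Pr_{\mathbf I\sim\mu}\bigl[v\in\mathbf I\bigr]\ \ge\ (1-o(1))\,\frac{\ln\deg(v)}{\deg(v)}\qquad\text{for every }v\in V(G).
\]
This is the kind of bound produced by the hard-core model together with occupancy-fraction and linear-programming arguments for independent sets in triangle-free graphs, and the hard-core model additionally enjoys the negative-association and spatial-mixing properties used below. Take two independent samples $\mathbf I,\mathbf J\sim\mu$ and set $A:=\mathbf I\setminus\mathbf J$ and $B:=\mathbf J\setminus\mathbf I$, disjoint independent sets. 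Fix $v$ and a neighbour $w$ of $v$. Conditioning on $v\in A$ means conditioning on $v\in\mathbf I$ and $v\notin\mathbf J$; since $w\sim v$ the first event forces $w\notin\mathbf I$ for free, while the second event can only increase $\Pr[w\in\mathbf J]$ by monotonicity of the hard-core model. Hence $\Pr[w\in B\mid v\in A]\ge(1-o(1))\tfrac{\ln\deg(w)}{\deg(w)}$, and summing over $w\sim v$,
\[
\Exp\bigl[\,|N(v)\cap B|\ \bigm|\ v\in A\,\bigr]\ \ge\ (1-o(1))\sum_{w\sim v}\frac{\ln\deg(w)}{\deg(w)} .
\]
When all neighbours of $v$ have degree $\Theta(d)$ this is $(1-o(1))\ln d\ge(1-o(1))\ln d$; in general it can be much smaller, which is exactly the source of the trouble below.

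\smallskip
\noindent\textbf{Conversion by peeling.}
Having $A,B$ with the right expected cross-degrees, I would extract their \emph{$(k,k)$-core}: repeatedly delete from $A$ every vertex with fewer than $k$ neighbours in $B$ and from $B$ every vertex with fewer than $k$ neighbours in $A$, and let $(A',B')$ be the result. If $(A',B')\ne\emptyset$ we are done with $k=c\log d$ for a suitably small constant $c$. To show the core is nonempty one wants two inputs: a lower-tail statement, e.g.\ $\Pr[\,|N(v)\cap B|\ge k\mid v\in A\,]\ge\tfrac12$, so that a constant fraction of $A$ survives the first deletion round (this is where the concentration properties of $\mu$ enter); and a potential-function argument, in the style of the analysis of $k$-core peeling processes, showing the deletions do not cascade all the way to the empty set.

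\smallskip
\noindent\textbf{Where it is hard.}
The crux is the passage from ``the $(k,k)$-core is large in expectation'' to ``the $(k,k)$-core is nonempty'': this is the familiar obstacle of core thresholds, and here it is aggravated by the fact that the per-vertex expected cross-degree is only $\Theta(\log d)$ and genuinely drops below $\log d$ at vertices whose neighbourhood contains many high-degree vertices. Such vertices do occur in triangle-free graphs of minimum degree $d$, and they will simply be peeled away — but then controlling what remains demands a genuinely global argument rather than vertex-by-vertex bookkeeping. Carrying this out seems to require an understanding of the hard-core model, or of some bespoke independent-set distribution, on triangle-free graphs that goes beyond current local-lemma and entropy-compression technology; this is precisely why only the much weaker estimate $d^2/(2n)$ is proved here, which settles Conjecture~\ref{conj:trianglebip} only in the dense regime $d=\Omega(\sqrt{n\log n})$. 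Finally, it is worth recording why one cannot stay purely local: if instead one fixes a uniformly random vertex $v$, sets $A\subseteq N(v)$ (independent of size $\ge d$), and tries to assemble $B$ inside $N^2(v)$ from the independent sets $N(u)\cap N^2(v)$ with $u\in A$, the only quantities one sees are the codegrees of pairs inside $N(v)$, whose average is $\Theta(d^2/n)$; such an argument is transparent but provably cannot beat $\Theta(d^2/n)$, so a global independent-set distribution — one not tied to a single neighbourhood — is unavoidable for the logarithmic bound.
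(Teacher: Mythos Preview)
The statement you were asked to address is a \emph{conjecture}, and the paper does not prove it. Indeed, immediately after stating it the authors write ``Although the conjecture is new, it might be difficult,'' and the rest of the paper is devoted to partial results (the $d^2/(2n)$ bound of Theorem~\ref{thm:main}, the near-regular case via~\cite{AKS81}, and so on). You recognised this correctly and offered a plan of attack rather than a proof, explicitly flagging the gap; that is the right call.

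Your proposed route --- sample two independent sets from a hard-core-type distribution, take their difference, and peel to a $(k,k)$-core --- is a sensible heuristic and is in the spirit of later developments the paper alludes to in its ``Notes added'' (the hard-core/occupancy approach of~\cite{DJKP18+occupancy,DJKP18+local}). You have also put your finger on the genuine obstruction: the expected cross-degree $\sum_{w\sim v}\ln\deg(w)/\deg(w)$ is not uniformly $\Omega(\log d)$ when neighbourhoods are highly irregular, and controlling the peeling cascade in that regime is exactly what no one currently knows how to do. For context, the paper notes that Kwan, Letzter, Sudakov and Tran~\cite{KLST18+} subsequently obtained $C\log d/\log\log d$, so the full conjecture remains open and your honest identification of the gap is accurate.
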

\noindent
Although the conjecture is new, it might be difficult.
Conjecture~\ref{conj:trianglebip} aligns with central challenges in combinatorics, especially about stable sets in triangle-free graphs.
For example, iterating a result of Ajtai, Koml\'os and Szemer\'edi~\cite{AKS81} for triangle-free graphs of given average degree implies Conjecture~\ref{conj:trianglebip} for any triangle-free graph with $O(d)$ maximum degree\footnote{For the avoidance of any shadow of a doubt for the reader, fix some $0<\eps<1/2$ to be specified later, and consider a triangle-free graph on $n$ vertices with minimum degree $d$ and maximum degree $O(d)$.
By the result in~\cite{AKS81}, every induced subgraph on at least $\eps n$ vertices has a stable set of size $x=\lceil C(\eps n \log d)/d\rceil$ for some fixed $C>0$.
Thus we can repeatedly extract, so long as at least $\eps n$ vertices remain, disjoint stable sets of size $x$, ultimately yielding a partial proper colouring with at most $(1-\eps)n/x\le (1-\eps)d/(C\eps\log d)$ colour classes, all of size $x$.
The number of edges incident to the remaining vertices is $O(\eps n d)$, which can be made smaller than $n d/4$ for $\eps$ chosen small enough.
By the pigeonhole principle, two of the colour classes induce a bipartite subgraph of average degree at least $(nd/4) /(\binom{(1-\eps)n/x}{2}\cdot (2x)) \ge
(C\eps\log d)/(4(1-\eps)^2)$, which in turn contains a bipartite induced subgraph of minimum degree $(C\eps\log d)/(8(1-\eps)^2)$, as desired.}. 
The same result (cf.~\cite[Thm.~3.4]{EKT19}) confirms Conjecture~\ref{conj:trianglebip} for any triangle-free graph on $n$ vertices provided $d= \Omega(n^{2/3}\sqrt{\log n})$ as $n\to\infty$.

Our main result is a stronger, near optimal version of this last statement.

\begin{theorem}\label{thm:main}
There are constants $C_1,C_2>0$ such that, for $0\le d\le n/2$,
\begin{itemize}
\item
any triangle-free graph on $n \ge 2$ vertices with minimum degree at least $d$ contains a bipartite induced subgraph of minimum degree at least $\max\{C_1 d\sqrt{\log n/n},d^2/(2n)\}$; and
\item
provided $n/d$ is large enough, there is a triangle-free graph on between $n/2$ and $n$ vertices with minimum degree at least $d$ such that every bipartite induced subgraph has minimum degree at most $\lceil C_2 d^2/n\rceil\log n$.
\end{itemize}
\end{theorem}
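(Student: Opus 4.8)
The plan is to split the positive (first) bullet into its two lower bounds and then treat the construction.

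\emph{The bound $d^2/(2n)$.} The one thing I need is that for distinct vertices $u,v$ of a triangle-free graph $G$ the neighbourhoods $N(u),N(v)$ are independent, so $G[N(u)\cup N(v)]$ is a bipartite induced subgraph; moreover a $G$-edge with an endpoint in $N(u)$ and one in $N(v)$ cannot have an endpoint in $N(u)\cap N(v)$ (that would be an edge inside a neighbourhood), so this subgraph has exactly $e_{uv}$ edges on $n_{uv}:=|N(u)\cup N(v)|$ vertices, where $e_{uv}$ counts the $G$-edges between $N(u)$ and $N(v)$. Greedily deleting vertices of small degree then leaves a bipartite induced subgraph of minimum degree at least $e_{uv}/n_{uv}$, so it suffices to find one pair with $e_{uv}/n_{uv}\ge d^2/(2n)$. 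Take $v$ of maximum degree $\Delta$. Since triangle-freeness forces $N(a)\cap N(v)=\emptyset$ for every $a\in N(v)$, a short double count gives $\sum_{u\ne v}e_{uv}=\sum_{a\in N(v)}\sum_{b\in N(a)}\deg(b)\ge\Delta d^2$, while $\sum_{u\ne v}n_{uv}\le\sum_{u\ne v}(\deg u+\Delta)=(n-2)\Delta+2e(G)\le 2n\Delta$ because $\Delta$ is maximum. Comparing the two sums, some $u$ has $e_{uv}/n_{uv}\ge d^2/(2n)$, and we are done.

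\emph{The bound $C_1 d\sqrt{\log n/n}$.} Here I would run a sharpened version of the colouring argument of the footnote. Using the Shearer/Ramsey estimate that every triangle-free graph on $m$ vertices has an independent set of size $(1-o(1))\sqrt{\tfrac12 m\log m}$, I peel off disjoint independent sets of a common size $x=\Theta(\sqrt{n\log n})$ while at least $\eps n$ vertices remain, for a small constant $\eps>0$; this produces $c=\Theta(\sqrt{n/\log n})$ classes $V_1,\dots,V_c$ covering all but $<\eps n$ vertices. As $cx\le n$ and each $V_i\cup V_j$ induces a bipartite subgraph on $\le 2x$ vertices, the pair carrying the most edges gives (after greedy deletion) a bipartite induced subgraph of minimum degree $\gtrsim nd/(c^{2}x)\gtrsim d/c\gtrsim d\sqrt{\log n/n}$ — \emph{provided} the uncovered vertices meet at most, say, a quarter of the edges. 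That proviso is exactly where the difficulty lies: if $\Delta(G)=O(d)$ the $<\eps n$ uncovered vertices meet only $O(\eps nd)$ edges and the argument closes, but $\Delta(G)$ can be far larger than $d$, in which case the uncovered part could carry almost all edges. I would deal with that regime separately — a maximum-degree vertex $v$ then supplies a large independent set $A=N(v)$, whose many $A$-to-$(V\setminus A)$ edges let one extract, by a hard-core–type independent-set selection inside $G-A$, an independent set $B$ with $G[A\cup B]$ already of minimum degree of the right order (and if $G-A$ is itself dense it contains a good bipartite piece outright). Executing this case and matching it to the previous one near $d\asymp\sqrt{n\log n}$, where the two terms $d^2/(2n)$ and $C_1 d\sqrt{\log n/n}$ cross, is the main obstacle I anticipate.

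\emph{The construction.} For the matching example I would take (a blow-up of) a pseudorandom triangle-free graph. When $d=O(\sqrt{n\log n})$: a triangle-free $H$ on $\Theta(n)$ vertices, essentially $\Theta(d)$-regular, of density $p=\Theta(d/n)$, with the pseudorandom property that between any two independent sets $X,Y$ there are $O(p|X||Y|)$ edges, and with $\alpha(H)=O((n/d)\log n)$; the random (or Kim-type) graphs realising $R(3,t)=\Omega(t^2/\log t)$ are of this kind. When $d$ is larger, blow up each vertex of such an $H$ (taken on $m=\Theta((n/d)^2\log(n/d))$ vertices) into an independent set of size $t=\Theta(n/m)$: the blow-up is triangle-free, has $\Theta(n)$ vertices (pad to fall in $[n/2,n]$), and minimum degree $\Theta(d)$. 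Its bipartite induced density is small because any bipartite induced subgraph of the blow-up lies over two independent sets of $H$ and inflates their induced bipartite minimum degree by a factor at most $t$, while in $H$ a bipartite induced subgraph on $\le 2\alpha(H)$ vertices has only $O(p\,\alpha(H)^2)$ edges and hence minimum degree $O(p\,\alpha(H))=O(\log n)$; multiplying out, the blow-up achieves $O(t\log n)=O(d^2/n)$, comfortably below $\lceil C_2 d^2/n\rceil\log n$. The delicate point is to choose $H$ whose pseudorandomness controls the edge count between \emph{all} pairs of independent sets simultaneously — a union bound over sets of size $\alpha(H)$ — which is presumably what forces the extra logarithmic factor the stated bound keeps.
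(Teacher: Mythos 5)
Your proof of the $d^2/(2n)$ term is correct and self-contained: averaging $e_{uv}/n_{uv}$ over pairs of neighbourhoods with one of them anchored at a maximum-degree vertex does work, and it is essentially the paper's proof of Theorem~\ref{th:abovesquareroot} (the paper instead derives $d^2/(2n)$ by bounding $\chi_f\le n/d$ via a uniformly random neighbourhood and invoking Theorem~\ref{thm:fractional}). The other two parts, however, are not proofs. For the $C_1 d\sqrt{\log n/n}$ term you correctly identify that the peel-off-colour-classes argument only controls the edges missed by the leftover $\eps n$ vertices when $\Delta=O(d)$, and your proposed repair for irregular graphs ("a hard-core--type independent-set selection inside $G-A$") is a placeholder, not an argument. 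The missing idea is the fractional relaxation: Shearer's bound gives $\chi_f(G)\le C\sqrt{n/\log n}$ for \emph{every} triangle-free graph on $n$ vertices with no degree hypothesis (Lemma~\ref{lem:JeTo95}/Corollary~\ref{cor:She83}), and Theorem~\ref{thm:fractional} of~\cite{EKT19} converts $\chi_f\le k$ together with average degree $\ge d$ into a bipartite induced subgraph of average degree $\ge d/k$. A fractional colouring covers every vertex with probability $\ge 1/k$ regardless of its degree, which is exactly what rescues your pigeonhole computation in the irregular case; your integer colour classes cannot do this.

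For the construction, your sketch assumes the existence of a triangle-free $H$ on $m$ vertices that simultaneously has minimum degree $\Omega(\sqrt m)$ and spans $O(p|X||Y|)$ edges between \emph{every} pair of disjoint sets of size at most $\alpha(H)$. That existence statement is the entire content of the paper's Theorem~\ref{thm:Spe77} and is not a black box one can pull from the $R(3,t)$ literature: a plain union bound in $G(m,p)$ must be run jointly with triangle-freeness and the minimum-degree condition, which the paper does via a delicately weighted application of the General Local Lemma with four families of bad events (triangles, large stable sets, low-degree vertices, and dense cuts $\mathrm{D}_i$ for all $\beta\log m\le i\le t$). Two further points you gloss over: Proposition~\ref{prop:reduction} is needed to reduce to pairs of \emph{equal-sized} sets before the union bound is affordable, and a bipartite induced subgraph of the blow-up projects to a \emph{semi-bipartite} (possibly overlapping) pair of stable sets in $H$, which is why the paper proves the stronger semi-bipartite form of the statement. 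As written, your construction paragraph records the right target but defers exactly the steps that constitute the proof.
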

\noindent
Thus we observe the following phase transition behaviour: for $d$ there is a critical exponent of $n$ (namely $1/2$) above which we can be assured of bipartite induced minimum degree polynomially large in $n$ and below which we cannot.
Theorem~\ref{thm:main} resolves Problem~4.1 in~\cite{EKT19} up to a logarithmic factor.
Our constructions for near optimality are blow-ups of an adaptation of Spencer's construction for lower bounds on $R(3,t)$~\cite{Spe77} (see Section~\ref{sec:constructions}).

Due to a captivating connection between bipartite induced density and fractional colouring~\cite{EKT19}, through which high bipartite induced density is guaranteed by small fractional chromatic number (see Theorem~\ref{thm:fractional}), Theorem~\ref{thm:main} is closely related to the following extremal result. 

\begin{theorem}\label{thm:main,fractional}
There are constants $C_1,C_2>0$ such that, for $0< d\le n/2$, 
\begin{itemize}
\item
any triangle-free graph on $n$ vertices with minimum degree at least $d$ has fractional chromatic number at most $\min\{C_1\sqrt{n/\log n},n/d\}$;
\item
provided $n/d$ is large enough, there is a triangle-free graph on between $n/2$ and $n$ vertices with minimum degree at least $d$ and fractional chromatic number at least $C_2\min\{\sqrt{n/\log n},n/d\}$.
\end{itemize}
\end{theorem}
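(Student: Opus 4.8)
The plan is to handle the upper bound and the construction separately, and the upper bound itself splits into its two terms. For the bound $\chi_f(G)\le n/d$ I would argue straight from the weighted‑stable‑set definition of $\chi_f$: since $G$ is triangle‑free, every neighbourhood $N(u)$ is a stable set, and putting weight $1/d$ on each of the $n$ stable sets $N(u)$, $u\in V(G)$, gives a fractional colouring of total weight $n/d$, because a vertex $v$ lies in $N(u)$ for exactly its (at least $d$) neighbours $u$ and so collects weight at least $\deg(v)/d\ge 1$. Equivalently, sampling $u$ uniformly and outputting $N(u)$ covers every vertex with probability at least $d/n$.

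For the bound $\chi_f(G)\le C_1\sqrt{n/\log n}$, which needs no degree hypothesis, I would iterate a Ramsey‑type estimate. By $R(3,t)=O(t^2/\log t)$, every triangle‑free graph on $m$ vertices has a stable set of size at least $\max\{1,c\sqrt{m\log m}\}$ for an absolute $c>0$. Repeatedly deleting a largest stable set yields a proper colouring, and a short induction on $n$ shows the number of colour classes is $O(\sqrt{n/\log n})$, since each deletion decreases $\sqrt{n/\log n}$ by $\Omega(1)$; hence $\chi_f(G)\le\chi(G)=O(\sqrt{n/\log n})$. (To recover the sharp leading constant $2+o(1)$ one instead splits $V(G)$ at a degree threshold $\Delta_0\approx\sqrt{n\log n}$: the part of maximum degree $\le\Delta_0$ is fractionally coloured using the known $O(\Delta_0/\log\Delta_0)$ bound for triangle‑free graphs of bounded degree, the vertices of degree $>\Delta_0$ are covered by the random‑neighbourhood colouring of weight $n/\Delta_0$, and $\Delta_0$ is optimised; for the stated constant $C_1$ this refinement is unnecessary.)

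For the lower bound I would produce, for each $(n,d)$ with $n/d$ large enough, a triangle‑free graph $G$ with $|V(G)|\in[n/2,n]$, $\delta(G)\ge d$ and independence number as small as possible, and then invoke the universal inequality $\chi_f(G)\ge |V(G)|/\alpha(G)$. Following and adapting Spencer's Lovász‑Local‑Lemma construction of triangle‑free graphs with small independence number, I would take a binomial random graph $G(N,p)$ and apply the Local Lemma to the bad events ``a given triple spans a triangle'' and ``a given $k$‑set is stable'' to extract a triangle‑free graph on $\Theta(N)$ vertices with independence number of the Ramsey‑optimal order $O(\sqrt{N\log N})$ and minimum degree $\Theta(\sqrt{N\log N})$; a blow‑up (each vertex replaced by an independent set of suitable size) then shifts the vertex count into $[n/2,n]$ and scales the minimum degree up to at least $d$ without changing $\chi_f$. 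Choosing $N$, $p$ and the blow‑up factor according to whether $d\lesssim\sqrt{n\log n}$ or $d\gtrsim\sqrt{n\log n}$, and reading off $\chi_f\ge |V|/\alpha$, then gives the bound $\Omega(\min\{\sqrt{n/\log n},n/d\})$.

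I expect the construction to be the main obstacle. Three quantities must be controlled simultaneously: the independence number, which has to be driven down to the Ramsey‑optimal $\Theta(\sqrt{n\log n})$ rather than the $O(\sqrt{n}\log n)$ that a crude Local Lemma argument yields; the minimum degree, which must match the prescribed $d$ over its whole range; and the vertex count. One must also verify the graph is regular enough that $\chi_f$, not merely $\chi$, is forced up to order $|V|/\alpha$. The entangled Local Lemma estimate linking $p$, $k$ and $N$, together with the Chernoff‑type concentration needed for the minimum‑degree guarantee after deletions and blow‑ups, is where the real work lies; the upper bound, by contrast, should be routine once the Ramsey input is in hand.
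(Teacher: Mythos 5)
Your upper bound is correct and is essentially the paper's argument: the $n/d$ term is exactly the uniform random-neighbourhood distribution (each neighbourhood is a stable set by triangle-freeness, and $v$ is covered with probability $\deg(v)/n\ge d/n$), and the $O(\sqrt{n/\log n})$ term is the Erd\H{o}s--Hajnal greedy extraction of Shearer/Ajtai--Koml\'os--Szemer\'edi-sized stable sets, which the paper formalises via a Jensen--Toft-style integral bound. (One small point: $\chi_f\ge |V|/\alpha$ holds for every graph, so the ``regular enough'' worry in your last paragraph is vacuous.)

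The genuine gap is in the construction. You propose to obtain a triangle-free graph on $N$ vertices with independence number of the Ramsey-optimal order $O(\sqrt{N\log N})$ by ``adapting'' Spencer's Local Lemma argument, while noting yourself that the crude version only yields $O(\sqrt{N}\log N)$. That loss is not an artifact of crudeness: Spencer's LLL construction is intrinsically limited to $\alpha=O(\sqrt{N}\log N)$ (equivalently $R(3,t)=\Omega(t^2/\log^2 t)$), and removing the extra $\sqrt{\log N}$ to reach $\alpha=O(\sqrt{N\log N})$ (equivalently $R(3,t)=\Omega(t^2/\log t)$) required Kim's semi-random method, later recovered via the triangle-free process; no tuning of $p$, $k$ and the LLL weights achieves it. With only the LLL input, your blow-up plus $\chi_f\ge|V|/\alpha$ gives $\chi_f=\Omega(\sqrt{n}/\log n)$, a $\sqrt{\log n}$ factor short of the claimed $C_2\sqrt{n/\log n}$. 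The paper avoids this by taking the base graph to be the final output of the triangle-free process (Bohman--Keevash; Fiz Pontiveros--Griffiths--Morris), which with high probability has minimum degree $(\sqrt{1/2}+o(1))\sqrt{j\log j}$ and stability number at most $(\sqrt{2}+o(1))\sqrt{j\log j}$, and then performs exactly the blow-up and $\chi_f\ge|V|/\alpha$ step you describe; the Spencer-type LLL construction is used only for the sharpness of the bipartite-induced-density theorem, whose statement tolerates the extra logarithmic factor. If you replace your base graph by Kim's graph or the triangle-free process output, the rest of your plan goes through.
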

\noindent
This bound is basic, but has not appeared in the literature as far as we know. There is equality in the $n/d$ bound when $d=n/2$, in which case we must have a complete bipartite graph with two equal-sized parts. 
As for the $\Theta(\sqrt{n/\log n})$ bound, it is an interesting problem to sharpen the asymptotic constants.
In Section~\ref{sec:redux}, we combine a recent colouring result of Molloy~\cite{Mol17+} with the proof idea in Theorem~\ref{thm:main,fractional} to show the following as a first step.

\begin{theorem}\label{thm:redux}
As $n\to\infty$, any triangle-free graph on $n$ vertices has fractional chromatic number at most $(2+o(1))\sqrt{n/\log n}$.
\end{theorem}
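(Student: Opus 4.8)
The plan is to prove the equivalent quantitative form: for every fixed $\eps>0$ there is $n_0=n_0(\eps)$ such that every triangle-free graph on $n\ge n_0$ vertices has fractional chromatic number at most $(2+\eps)\sqrt{n/\log n}$ (throughout, and as needed to quote Molloy's theorem, $\log$ denotes the natural logarithm). Write $\phi(x):=\sqrt{x/\log x}$; a short calculation shows $\phi$ is increasing and concave for $x$ beyond some absolute constant $N_0$, with $\phi'(x)=(1-1/\log x)/(2\sqrt{x\log x})$. The two ingredients are Molloy's result — a triangle-free graph of maximum degree $\Delta$ has chromatic number at most $(1+o(1))\Delta/\log\Delta$ — and the peeling device behind Theorem~\ref{thm:main,fractional}: since in a triangle-free graph every neighbourhood is a stable set, deleting the neighbourhood of a maximum-degree vertex removes that many vertices at the cost of a single colour, i.e.\ $\chi_f(H)\le 1+\chi_f(H-N(v))$.

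Given a triangle-free $G$ on $n$ vertices, I would run a single peeling pass: starting from $H_0:=G$, while the current graph $H_i$ (on $n_i$ vertices) has $n_i\ge N_0$ and $\Delta(H_i)\ge\sqrt{n_i\log n_i}$, pick a maximum-degree vertex $v_i$, colour the stable set $N_{H_i}(v_i)$ with a fresh colour, and set $H_{i+1}:=H_i-N_{H_i}(v_i)$; after $s$ steps one reaches a graph $H^*$ on $m$ vertices with $\chi_f(G)\le s+\chi_f(H^*)$. Since $\Delta(H^*)<\sqrt{m\log m}$ (or $m<N_0$), Molloy — or the trivial bound when $m$ is bounded — gives
\[
\chi_f(H^*)\ \le\ \chi(H^*)\ \le\ (1+o(1))\,\frac{\sqrt{m\log m}}{\log\sqrt{m\log m}}\ =\ (2+o(1))\,\phi(m).
\]
To bound $s$, note $n_{i+1}\le n_i-\sqrt{n_i\log n_i}$, so by concavity of $\phi$,
\[
\phi(n_i)-\phi(n_{i+1})\ \ge\ \phi(n_i)-\phi\!\left(n_i-\sqrt{n_i\log n_i}\right)\ \ge\ \sqrt{n_i\log n_i}\cdot\phi'(n_i)\ =\ \tfrac12\!\left(1-\tfrac{1}{\log n_i}\right)\ \ge\ \frac{1}{2+\eps}
\]
once $n_i$ is large. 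Telescoping — with routine care for a bounded tail of steps during which $n_i$ has dropped below a cut-off $n_1(\eps)$, and for the case that $m$ itself is small — yields $s\le(2+\eps)\bigl(\phi(n)-\phi(m)\bigr)+O_\eps(1)$. Adding the two bounds, the $\phi(m)$ terms cancel:
\[
\chi_f(G)\ \le\ s+\chi_f(H^*)\ \le\ (2+\eps)\phi(n)+O_\eps(1),
\]
and since $O_\eps(1)=o(\phi(n))$, for $n$ large this is at most $(2+2\eps)\sqrt{n/\log n}$; replacing $\eps$ by $\eps/2$ at the outset gives the claim.

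I expect the real content to sit in two places. First, the threshold $\sqrt{n\log n}$ is forced, not chosen: it is exactly the value making $\sqrt{n_i\log n_i}\cdot\phi'(n_i)\to\tfrac12$, so that each peeling step ``pays for itself'' against the $\phi$-budget and the peeling cost together with the cost of colouring the residue fit inside $(2+o(1))\phi(n)$; with a fixed threshold (not shrinking with the current order) the very same argument only yields the constant $3$, since one then spends $\sqrt{n/\log n}$ on the peeling and a further $(2+o(1))\sqrt{n/\log n}$ on $H^*$, with nothing to cancel. Second, converting the asymptotics $\phi(n)-\phi(n-\sqrt{n\log n})\to\tfrac12$ and Molloy's $(1+o(1))$ into a clean $(2+\eps)$ is pure bookkeeping but needs the right order of quantifiers: fix $\eps$; choose $n_1(\eps)$ above Molloy's threshold for relative error $\eps/10$ and large enough that $\phi$ is concave past it and the displayed step bound holds with $1/(2+\eps)$; then let $n\to\infty$, the bounded tail of peeling steps and the terminal graph being absorbed by $\chi_f(H)\le|V(H)|$ into the final $o(1)$. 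Everything else is standard.
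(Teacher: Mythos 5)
Your argument is correct, and the edge cases you flag really are routine: the single peeling step that may overshoot past the cut-off, the bounded tail once $n_i<n_1(\eps)$, and the residual graph all cost only $O_\eps(1)$ colours. (One small correction of emphasis: when invoking Molloy on $H^*$ you should split on whether $\Delta(H^*)$ is large, not on whether $m$ is large --- if $\Delta(H^*)$ stays bounded while $m\to\infty$, Molloy's $(1+o(1))$ is unavailable, but greedy gives $\chi(H^*)\le\Delta(H^*)+1=O_\eps(1)$, so nothing breaks.) However, your route is genuinely different from the paper's. The paper builds a single \emph{fractional} colouring: it first gives each vertex $v$ measure $\deg(v)/D$ by spreading $n$ intervals of measure $1/D$ over the $n$ neighbourhood stable sets, and then, for every degree threshold $i$ with $D^{1/(1+\eps/5)}\le i< D$, applies Molloy to the subgraph induced by the vertices of degree at most $i$, using each resulting colour class as a further $1/D$-measure interval to top every vertex up to measure $1$; the total measure is $n/D+(1+o(1))D/(2\log D)$, optimised at $D=\lfloor\sqrt{n\log n}\rfloor$. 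Your peeling with the order-dependent threshold $\sqrt{n_i\log n_i}$, one terminal application of Molloy, and the concave potential $\phi$ replaces all of that, and your explanation of why the threshold must shrink with the current order is exactly the right point to stress (a fixed threshold $\sqrt{cn\log n}$ balances to $(c^{-1/2}+2c^{1/2})\phi(n)\ge 2\sqrt2\,\phi(n)$ at best). As for what each approach buys: yours produces an honest proper colouring, so it bounds the \emph{chromatic} number by $(2+o(1))\sqrt{n/\log n}$ --- strictly stronger than the theorem as stated, and stronger than the paper's Corollary~\ref{cor:She83}, which only reaches $(2\sqrt2+o(1))\sqrt{n/\log n}$ for $\chi$ via Shearer; this is worth stating explicitly, since it is claimed nowhere in the paper. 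The paper's construction, on the other hand, meets each vertex's demand partly in proportion to its own degree, which is the form that localises (it is the seed of the local-demands and local-occupancy refinements mentioned in the notes added), whereas the peeling argument is inherently global.
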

\noindent
Note that if one could improve the factor $(2+o(1))$ to $(\sqrt{2}+o(1))$ (which we formally state as a conjecture below), then it would match the best to date asymptotic upper bound for the Ramsey numbers $R(3,t)$ due to Shearer~\cite{She83}.
By the final outcome of the triangle-free process~\cite{BoKe13+,FGM20}, Theorem~\ref{thm:redux} is sharp up to a $(2\sqrt{2}+o(1))$ factor.
(The triangle-free process also gives sharpness up to a constant factor in Theorem~\ref{thm:main,fractional}.)

Theorem~\ref{thm:main} is not far from optimal, but the best constructions we know so far are almost regular. As noted above, we already know Conjecture~\ref{conj:trianglebip} in the almost regular case. This motivates the following bound which improves on Theorem~\ref{thm:main} if the graph is irregular. We prove this in Section~\ref{sec:abovesquareroot}.

\begin{theorem}\label{th:abovesquareroot}
Any triangle-free graph on $n\ge2$ vertices with $m\ge1$ edges and $w_3$ (directed) three-edge walks contains a bipartite induced subgraph of minimum degree at least
$w_3/(4 n m)$.
\end{theorem}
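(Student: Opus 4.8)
The plan is to produce the desired bipartite induced subgraph inside a graph of the form $B(u,v):=G[N(u)\cup N(v)]$ for a well-chosen pair of vertices $u,v$. First note that since $G$ is triangle-free, $N(u)$ and $N(v)$ are independent sets for any two vertices $u,v$, so $B(u,v)$ is bipartite and has at most $d(u)+d(v)$ vertices. Recall also the standard fact that a graph with $E$ edges on $V\ge 1$ vertices contains a nonempty induced subgraph of minimum degree at least $E/V$: repeatedly delete a vertex whose degree in the current graph is less than $E/V$; since each deletion removes fewer than $E/V$ edges, the process cannot delete all $V$ vertices once $E\ge 1$. Applying this inside $B(u,v)$ produces a subgraph that is still bipartite and still induced in $G$. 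Hence it suffices to find a pair $\{u,v\}$ of distinct vertices with $e(B(u,v))\ge \frac{w_3}{4nm}\bigl(d(u)+d(v)\bigr)$.

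The heart of the argument is the double-counting identity $\sum_{\{u,v\}} e(B(u,v)) = w_3/2$, where the sum ranges over unordered pairs of distinct vertices. To verify it, fix an edge $ab$ of $G$ and count the pairs $\{u,v\}$ with $ab\in E(B(u,v))$, equivalently with $\{a,b\}\subseteq N(u)\cup N(v)$. Since $a\sim b$ and $N(u),N(v)$ are independent, $a$ and $b$ do not both lie in $N(u)$, nor both in $N(v)$; and since $G$ is triangle-free, $N(a)\cap N(b)=\emptyset$, so exactly one of $a,b$ lies in $N(u)$ and the other in $N(v)$. The number of such pairs is precisely $|N(a)|\,|N(b)|=d(a)d(b)$. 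Summing over all edges and using $w_3=2\sum_{ab\in E}d(a)d(b)$ (to build a directed three-edge walk, choose its middle edge, then an endpoint off each end) gives the identity.

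To finish, compare the identity with $\sum_{\{u,v\}}\bigl(d(u)+d(v)\bigr)=(n-1)\sum_{v} d(v)=2(n-1)m$. Discarding the pairs with $d(u)+d(v)=0$ (which contribute nothing to the first sum), the mediant inequality yields a pair $\{u,v\}$ with $d(u)+d(v)\ge 1$ and $e(B(u,v))/(d(u)+d(v))\ge (w_3/2)/(2(n-1)m)\ge w_3/(4nm)$, which is exactly what we needed; the degenerate case $w_3=0$ is trivial. I expect the only delicate point to be the double-counting identity, where triangle-freeness is used twice — once so that each $B(u,v)$ is bipartite, and once, through $N(a)\cap N(b)=\emptyset$ for every edge $ab$, to pin the multiplicity down to exactly $d(a)d(b)$ rather than merely up to a bounded factor.
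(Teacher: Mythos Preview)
Your proof is correct and follows essentially the same approach as the paper's. Both arguments pick two vertices $u,v$, consider the bipartite induced subgraph $G[N(u)\cup N(v)]$, and show that for some choice the ratio of edges to $d(u)+d(v)$ is at least $w_3/(4nm)$; the paper phrases this as an expectation computation with $u,v$ chosen independently and uniformly, while you phrase the same computation as the double-counting identity $\sum_{\{u,v\}} e(B(u,v)) = w_3/2$ together with $\sum_{\{u,v\}}(d(u)+d(v)) = 2(n-1)m$ and a mediant/pigeonhole step.
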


\noindent
Observe that $w_3 \ge 2m d^2$ if the graph has minimum degree at least $d$ (with equality for $d$-regular graphs), and so Theorem~\ref{th:abovesquareroot} directly implies the $d^2/(2n)$ bound.
Put another way, Theorem~\ref{th:abovesquareroot} replaces the squared minimum degree term in the bound of Theorem~\ref{thm:main} by the average over all edges of the product of the two endpoint degrees.

Returning in some sense to the original motivation for this research, our explorations related to Theorem~\ref{thm:redux} have inspired a list colouring analogue of Theorems~\ref{thm:main} and~\ref{thm:main,fractional}, building upon similar ideas.
\begin{theorem}\label{thm:main,list}
There is a constant $C>0$ such that any triangle-free graph on $n$ vertices with minimum degree at least $d$ has list chromatic number at most $C\min\{\sqrt{n},(n\log n)/d\}$.
\end{theorem}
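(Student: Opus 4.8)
The plan is to prove the two bounds $O(\sqrt n)$ and $O((n\log n)/d)$ separately; since the second bound is only better when $d$ is large, I may assume $d$ is larger than some constant times $\sqrt{n\log n}$ when chasing it, and $d$ at least a large constant throughout. For the $O((n\log n)/d)$ bound the idea mirrors the fractional argument behind Theorem~\ref{thm:main,fractional}: in a triangle-free graph the neighbourhood of any vertex is a stable set, so two vertices at distance two have a common "independent" structure around them. More usefully, the complement of a closed neighbourhood has few vertices relative to $n/d$, and one can cover the vertex set by relatively few neighbourhoods. Concretely, greedily pick vertices $v_1,v_2,\dots$ so that the sets $N(v_i)$ together dominate $V(G)$; since each $N(v_i)$ has size at least $d$ and $G$ is triangle-free (so each $N(v_i)$ is stable), a volume/greedy-domination argument gives a dominating set of $O((n\log n)/d)$ vertices whose neighbourhoods are stable sets covering $V(G)$. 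Colour $G$ by assigning to each vertex the index of a covering stable set; this is a proper colouring with $O((n\log n)/d)$ colours, hence $\chi(G)=O((n\log n)/d)$, and since $\chi_\ell(G)$ of a graph of chromatic number $k$ and bounded structure... — here I must be careful: list chromatic number is not bounded by chromatic number in general. So instead I run the list-colouring version directly: with lists of size $L=C(n\log n)/d$, process a dominating family of stable sets one at a time; because each stable set $S=N(v_i)$ sees, from outside, only the already-coloured part, and the lists are long, a simple union-bound / greedy choice along each stable set (vertices in a stable set have no edges among themselves, so they can be coloured independently once their neighbours are fixed) succeeds. The arithmetic to check is that $L$ exceeds the number of stable sets used plus lower-order terms.

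For the $O(\sqrt n)$ bound I would invoke a sparsity/local-density dichotomy in the spirit of the Ajtai--Komlós--Szemerédi and Molloy results quoted in the excerpt. Split on the minimum degree $d$ of $G$: if $d\le \sqrt n$, then $G$ has maximum degree... — no, minimum degree small does not bound maximum degree. Better: iterate. Repeatedly pull out a vertex of current minimum degree; while that degree is below $\sqrt n$, the neighbourhood is a stable set of size less than $\sqrt n$, which we will handle by list-colouring greedily at the end with lists of size $\ge\sqrt n$. Once every remaining vertex has degree at least $\sqrt n$, we are in the regime $d\ge\sqrt n$, i.e. $n/d\le\sqrt n$, and the previous paragraph's bound $O((n\log n)/d)$ is $O(\sqrt n\log n)$ — which has an unwanted log. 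To remove it, in this high-degree regime I instead use Molloy's theorem: a triangle-free graph of maximum degree $\Delta$ has $\chi_\ell = O(\Delta/\log\Delta)$. Combined with the trivial bound $\chi_\ell\le\Delta+1$, and noting that if $\Delta\le\sqrt n$ we are already done while if $\Delta>\sqrt n$ then $\log\Delta=\Theta(\log n)$ so Molloy gives $\chi_\ell=O(\Delta/\log n)$ — this is only useful when $\Delta$ is not much larger than $\sqrt n\log n$, and for larger $\Delta$ one needs that large degree forces many vertices, contradicting $n$ being the vertex count unless the graph is dense... In fact the clean statement is: since every vertex has degree $<n$ trivially, Molloy gives $\chi_\ell=O(n/\log n)$, which is too weak. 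The resolution is to combine the two ideas: partition $V(G)$ by degree into low-degree vertices ($\deg < \sqrt n$) and high-degree vertices, apply the neighbourhood-covering argument only to the high-degree part (where $d\ge\sqrt n$ and Molloy-type refinement removes the log since we are covering by neighbourhoods of size $\ge\sqrt n$ and can afford a $(1/\log n)$ saving), and finish the low-degree vertices greedily with lists of size $\ge\sqrt n$.

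\begin{proof}[Proof plan]
Prove the two bounds separately. For $\chi_\ell(G)=O((n\log n)/d)$: greedily choose vertices $v_1,\dots,v_t$ so that $N(v_1),\dots,N(v_t)$ dominate $V(G)$; since each $|N(v_i)|\ge d$ and a greedy set-cover argument on an $n$-element ground set with sets of size $\ge d$ uses $t=O((n/d)\log n)$ sets, and since $G$ is triangle-free each $N(v_i)$ is stable. Given lists of size $L=CtL_0$ with $L_0$ a constant, process $i=1,\dots,t$: the vertices of $N(v_i)$ form an independent set, each with at least $L-(i-1)\ge L_0$ admissible colours remaining after forbidding colours of already-coloured neighbours outside; colour them one by one, choosing for each a colour not used by its (at most $t$) previously coloured neighbours. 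This yields a proper list colouring, so $\chi_\ell(G)\le L=O((n\log n)/d)$. For $\chi_\ell(G)=O(\sqrt n)$: let $A=\{v:\deg(v)<\sqrt n\}$ and $B=V(G)\setminus A$. Restricted to $B$, every vertex has degree $\ge\sqrt n$, so the covering argument above (applied within the subgraph induced by $B$, using that neighbourhoods in $B$ still lie in stable sets of $G$) together with Molloy's bound \cite{Mol17+} — used to colour each stable class with only $O(\Delta/\log\Delta)=O(\sqrt n)$ rather than $O(\sqrt n \log n)$ colours, absorbing the logarithmic overhead — gives a proper list colouring of $G[B]$ from lists of size $O(\sqrt n)$, provided each list has size at least $C\sqrt n$. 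Extend to $A$ greedily: order $A$ so that each vertex has at most $\sqrt n-1$ neighbours earlier in the order (possible since $G[A]$ is $\sqrt n$-degenerate), and colour each $v\in A$ from its list of size $\ge C\sqrt n$, avoiding the at most $\sqrt n-1$ colours on earlier neighbours in $A$ together with colours on neighbours in $B$ — here one needs $C$ large enough that $C\sqrt n$ exceeds $(\sqrt n-1)+|N(v)\cap B|$; since $|N(v)|<\sqrt n$ this holds. Combining, $\chi_\ell(G)=O(\min\{\sqrt n,(n\log n)/d\})$.

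The main obstacle is the $O(\sqrt n)$ bound in the regime where $d$ is around $\sqrt{n\log n}$: there the naive covering argument loses a $\log n$ factor, and removing it requires care in how Molloy's result is applied to the individual stable colour classes — specifically, one must argue that the relevant subgraphs have maximum degree $\Theta(\sqrt n)$ so that the $1/\log\Delta=\Theta(1/\log n)$ saving exactly cancels the $\log n$ from the set-cover step, and one must ensure the list sizes remain uniform throughout the iterated colouring.
\end{proof}
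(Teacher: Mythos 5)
There is a genuine gap, and it sits exactly at the point you flagged yourself and then did not resolve: converting a covering by stable sets (equivalently, a bound on the chromatic or fractional chromatic number) into a bound on the \emph{list} chromatic number. Your greedy scheme --- ``process the stable sets $N(v_1),\dots,N(v_t)$ in order; each vertex of $N(v_i)$ has at most $t$ previously coloured neighbours, hence at least $L-(i-1)$ admissible colours'' --- is false. A vertex $u\in N(v_i)$ can have \emph{many} neighbours inside a single earlier stable set $N(v_j)$, and in a list colouring those neighbours receive potentially all-distinct colours (their lists differ, so you cannot give a whole stable set one colour). The number of colours forbidden at $u$ is its back-degree in the colouring order, which can be as large as $\deg(u)$, not the number $t$ of stable sets processed. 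The correct mechanism, which the paper uses (Proposition~\ref{prop:logn}), is a random hashing of the colour universe: take an $(a,b)$-fractional colouring with $a/b=\chi_f$, map each list colour independently and uniformly into $\{1,\dots,a\}$, and restrict each vertex's list to the colours landing in its $b$-set; adjacent vertices then have disjoint surviving lists, and lists of size $\lceil\chi_f\log n\rceil$ survive nonempty with positive probability. Combined with $\chi_f\le n/d$ (the neighbourhood-sampling distribution, with no logarithmic loss), this gives $\chi_\ell\le\lceil(n\log n)/d\rceil$. Note that even a patched version of your set-cover route is lossy: greedy set cover already costs a factor $\log n$ to get $t=O((n/d)\log n)$ stable sets, and the hashing conversion costs another $\log n$, giving $O((n/d)\log^2 n)$.

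For the $O(\sqrt n)$ bound your degree threshold is wrong and your use of Molloy is not coherent. With the cut at $\sqrt n$, the high-degree part has $\chi_f\le n/\sqrt n=\sqrt n$ and the hashing conversion gives only $O(\sqrt n\log n)$; Molloy's theorem applies to a triangle-free graph of bounded \emph{maximum} degree, not ``to the individual stable colour classes'' (a stable set is list-colourable from lists of size $1$), so it cannot absorb that logarithm where you place it. The paper's choice is $D=(1+o(1))\sqrt{n/2}\,\log n$, calibrated so that $(n\log n)/D$ (the list bound for the vertices of degree $>D$, via fractional colouring plus hashing) and $D/\log D$ (Molloy's bound for the subgraph on vertices of degree $\le D$) are both $(1+o(1))\sqrt{2n}$. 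This breaks your combination step as well: once low-degree vertices may have up to $\sqrt n\log n$ neighbours, the greedy extension from $B$ to $A$ with lists of size $O(\sqrt n)$ fails, and one instead splits each list randomly into two halves, one reserved for each part, with a Chernoff bound guaranteeing both halves are large enough at every vertex. These are the two missing ideas; without them neither half of the theorem is established.
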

\noindent
We are unsure of how close this bound is to optimality, but in Section~\ref{sec:list} we offer speculation of the correct asymptotic order, in particular for the bound purely in terms of $n$.

Theorems~\ref{thm:main,fractional} and~\ref{thm:main,list} inspire the following natural question.
\begin{problem}
Given a function $d =d(n)$, letting $\chi(n,d)$ denote the largest chromatic number of a triangle-free graph on $n$ vertices with minimum degree at least $d$, asymptotically what is $\chi(n,d)/\min\{\sqrt{n/\log n},n/d\}$ as $n\to\infty$?
\end{problem}

\noindent
In a sense, substantial effort has already been devoted to this problem when $d$ is linear in $n$, in relation to a problem of Erd\H{o}s and Simonovits~\cite{ErSi73}, see e.g.~\cite{Tho02,BrTh11+}. In particular, for $\eps>0$ fixed, the answer is $\omega(1)$ if $d < (1/3-\eps)n$ and $d=\Omega(n)$, while it is $O(1)$ if $d > (1/3+\eps)n$ or $d = O(\sqrt{n\log n})$.
By Theorem~\ref{thm:main,list} (or Proposition~\ref{prop:logn} below), the ratio is always $O(\log n)$.

\paragraph{Outline of the paper.} 
This paper is organised as follows. In Section~\ref{sec:lower} we prove Theorem~\ref{thm:main,fractional}, which in turn implies the lower bound of Theorem~\ref{thm:main} (via Theorem~\ref{thm:fractional}). In Section~\ref{sec:constructions}, we use the Local Lemma to construct a random graph that certifies the near optimality of Theorem~\ref{thm:main}. In the remaining sections we explore generalisations of Theorems~\ref{thm:main} and \ref{thm:main,fractional}. In Section~\ref{sec:redux}, we prove Theorem~\ref{thm:redux} and we also provide bounds in terms of the number of edges. Section~\ref{sec:abovesquareroot} is devoted to the proof of Theorem~\ref{th:abovesquareroot} and related results involving the fractional chromatic number of the cube of triangle-free graphs. We discuss and prove Theorem~\ref{thm:main,list} in Section~\ref{sec:list}.
Finally, in Section~\ref{sec:largercliques} we make some concluding remarks concerning the exclusion of an arbitrary subgraph. In particular, we show how our results generalise to excluding any given cycle as a subgraph.

\subsection{Probabilistic preliminaries}\label{sub:probabilistic}

We use some specific forms of the Chernoff bound~\cite[(2.9) and~(2.11)]{JLR00}
and the Lov\'asz Local Lemma~\cite[Thm.~1.3]{Spe77}.
\begin{chernoff}
If $0\le\eps \le 1$, then \[\Pr(|\Bin(n,p) - np| \ge \eps np) < 2\exp(-\eps^2np/3).\]
If $x\ge 7np$, then $\Pr(\Bin(n,p) \ge x) < \exp(-x)$.
\end{chernoff}

\begin{glll}
Consider a set ${\cal E}=\{A_1,\dots,A_n\}$ of (bad) events such that
each $A_i$ is mutually independent of ${\cal E}-({\cal D}_i\cup A_i)$, for some ${\cal D}_i \subseteq {\cal E}$.
If we have reals $y_1,\dots,y_n >0$ such that for each $i$
\[
y_i\Pr(A_i)<1 \quad \text{and} \quad \log y_i > \sum_{A_j\in {\cal D}_i} y_j\Pr(A_j),
\]
then the probability that none of the events in $\cal E$ occur is positive.
\end{glll}

\section{Fractional colouring}\label{sec:lower}

Given a graph $G=(V,E)$, we say that a probability distribution $\mathcal{S}$ over the stable sets of $G$ satisfies property ${\rm Q}^*_r$ if $\Pr(v \in \mathbf{S})\ge r$ for every $v\in V$ and $\mathbf{S}$ taken randomly according to $\mathcal{S}$.
Recall that the \emph{fractional chromatic number $\chi_f(G)$} of $G$ is defined as the smallest $k$ such that there is a probability distribution over the stable sets of $G$ satisfying property ${\rm Q}^*_{1/k}$.

\begin{proof}[Proof of Theorem~\ref{thm:main,fractional}]
Let $G=(V,E)$ be a triangle-free graph on $n$ vertices with minimum degree at least $d$.
Since the fractional chromatic number is at most the chromatic number, the first term of the upper bound was already observed by Erd\H{o}s and Hajnal~\cite{ErHa85} as a consequence of the aforementioned result of Ajtai, Koml\'os and Szemer\'edi~\cite{AKS81} (see Section~\ref{sec:redux}).  
For the second term of the upper bound, choose $\mathbf{S}$ from full neighbourhood sets uniformly over all $n$ such sets. Since $G$ is triangle-free,  $\mathbf{S}$ is a stable set.
For all $v\in V$,
\[
\Pr(v\in \mathbf{S}) = \frac{\deg(v)}{n} \ge \frac{d}{n}.
\]
We have shown then that this distribution has property ${\rm Q}^*_{d/n}$, as required.

For sharpness, fix $\eps>0$ and let $j$ be the smaller of $n$ and the least value for which $2(2+\eps/4)^{1/2}\sqrt{j/\log j}\ge n/d$, and consider the final output of the triangle-free process on $j$ vertices. This is a random triangle-free graph that was shown, independently, by Bohman and Keevash~\cite{BoKe13+} and by Fiz Pontiveros, Griffiths and Morris~\cite{FGM20}, to have minimum degree $(\sqrt{2^{-1}}+o(1))\sqrt{j\log j}$ and stability number at most $(\sqrt{2}+o(1))\sqrt{j\log j}$ with high probability as $j\to\infty$. For large enough $j$ (which we can guarantee if $n/d$ is large enough), we may fix a triangle-free graph $\hat{G}$ on $j$ vertices that has minimum degree at least $(2+\eps/4)^{-1/2}\sqrt{j\log j}$ and stability number at most $(2+\eps/4)^{1/2}\sqrt{j\log j}$. Form a new graph $G$ from $\hat{G}$ by replacing each vertex by a stable set of size $\lfloor n/j \rfloor$, and adding a complete bipartite graph between every pair of stable sets that corresponds to an edge in $\hat{G}$.
Observe that $G$ is a triangle-free graph on between $n/2$ and $n$ vertices with minimum degree at least $(2+\eps/4)^{-1/2}\sqrt{j\log j}\lfloor n/j \rfloor \ge d$. Moreover, $G$ has  stability number at most $(2+\eps/4)^{1/2}\sqrt{j\log j}\lfloor n/j \rfloor \le 2(2+\eps/4)d$ and so has fractional chromatic number at least $(n/2)/(2(2+\eps/4)d)=n/((8+\eps)d)$, as desired.

(Note that we essentially lost a factor $2$ twice due to rounding, which is only an issue when $d=\Theta(\sqrt{n\log n})$. Thus when $d=\omega(\sqrt{n\log n})$ the $n/d$ upper bound is in fact correct up to a factor of $(2+o(1))$ as $n\to\infty$, and so in this case we can take the choice $C_2=1/2+o(1)$ in the theorem statement.)

For the other lower bound, recall that the triangle-free process on $n$ vertices yields, with high probability, a random graph with stability number at most $(\sqrt{2}+o(1))\sqrt{n\log n}$. Thus the fractional chromatic number of that graph is at least $(2+o(1))^{-1/2} \sqrt{n/\log n}$.
\end{proof}

\section{Bipartite induced density}\label{sec:constructions}

Here is a link between fractional colouring and bipartite induced density, and hence between Theorems~\ref{thm:main} and~\ref{thm:main,fractional}.

\begin{theorem}[Esperet, Kang, and Thomass\'e~\cite{EKT19}]\label{thm:fractional}
Any graph with fractional chromatic number at most $k$ and average degree $d$ has a bipartite induced subgraph of average degree at least $d/k$.
\end{theorem}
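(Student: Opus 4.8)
The plan is to exploit the LP-duality flavour of the fractional chromatic number: a graph $G$ with $\chi_f(G)\le k$ admits a probability distribution $\mathcal S$ over its stable sets with $\Pr(v\in\mathbf S)\ge 1/k$ for every vertex $v$. The idea is to pick \emph{two} independent samples $\mathbf S_1,\mathbf S_2$ from $\mathcal S$ and argue that the bipartite induced subgraph on $\mathbf S_1\cup \mathbf S_2$ (more precisely, on the symmetric difference, to avoid the overlap which contributes no edges between the two sides) has large expected number of edges relative to its number of vertices, so that some outcome realises a bipartite induced subgraph of average degree at least $d/k$.

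First I would set up the first-moment computation. Writing $m=|E(G)|$ so that the average degree is $2m/n$, for an edge $uv\in E(G)$ let $X_{uv}$ be the indicator that $uv$ is an edge of the bipartite subgraph induced between $\mathbf S_1\setminus\mathbf S_2$ and $\mathbf S_2\setminus\mathbf S_1$; then $\Exp X_{uv}\ge \Pr(u\in\mathbf S_1)\Pr(v\in\mathbf S_2)+\Pr(v\in\mathbf S_1)\Pr(u\in\mathbf S_2)$ minus the (bounded) correction coming from requiring $u\notin\mathbf S_2$ and $v\notin\mathbf S_1$; since $\mathbf S_1$ is stable and $uv\in E$, at most one of $u,v$ lies in $\mathbf S_1$, and similarly for $\mathbf S_2$, so in fact $X_{uv}$ equals the indicator that $\{u,v\}$ is split by the pair $(\mathbf S_1,\mathbf S_2)$ with one endpoint in each, giving $\Exp X_{uv}=\Pr(u\in\mathbf S_1,v\in\mathbf S_2)+\Pr(v\in\mathbf S_1,u\in\mathbf S_2)\ge 2/k^2$ by independence of the two samples. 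Summing over edges, the expected number of edges of the bipartite subgraph is at least $2m/k^2$, while the expected number of vertices $|\mathbf S_1\cup\mathbf S_2|$ is at most $2n/k$ (in fact one can bound $|\mathbf S_1\cup\mathbf S_2|\le 2n/k$ in expectation, or just use $\le n$). Taking the ratio of expectations is not quite legitimate, so I would instead consider the random quantity (edges) $-\tfrac{d}{2k}\cdot$(vertices) and show its expectation is nonnegative, so some outcome $H$ on vertex set $\mathbf S_1\cup\mathbf S_2$ has $|E(H)|\ge \tfrac{d}{2k}|V(H)|$, i.e.\ average degree at least $d/k$; this $H$ is bipartite (with the bipartition inherited from $\mathbf S_1,\mathbf S_2$, the overlap vertices being isolated and harmlessly removable) and induced.

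The main obstacle I anticipate is handling the overlap $\mathbf S_1\cap\mathbf S_2$ and the conditioning cleanly: one must be careful that an edge counted on the bipartite side really does have its two endpoints on opposite sides of the bipartition, which forces the small casework on which of $u,v$ can belong to a given stable set, and one must make sure the linear-combination-of-expectations trick is set up with the right constant so that ``$\ge \tfrac{d}{2k}|V(H)|$ edges'' translates to ``average degree $\ge d/k$''. Once the bookkeeping is arranged so that each original edge contributes probability $\ge 2/k^2$ to the bipartite edge count and each vertex contributes $\le 2/k$ to the vertex count, the desired bound $d/k$ falls out of $\tfrac{2m/k^2}{2n/k} = \tfrac{m/n}{k} = \tfrac{d/2}{k}$ for average degree; adjusting by a factor of $2$ in the final passage from average degree of $H$ to the claimed $d/k$ (or, more carefully, taking the bipartite subgraph between $\mathbf S_1$ and $\mathbf S_2$ and splitting its edge set according to which sample is the ``left'' side) should close the gap. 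I would also double-check the edge case where $\mathcal S$ is degenerate or $d=0$, which is trivial.
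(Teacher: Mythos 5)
Your overall strategy (two independent samples $\mathbf S_1,\mathbf S_2$ from a distribution witnessing $\chi_f\le k$, a first-moment comparison of edges against vertices, and discarding the isolated overlap $\mathbf S_1\cap\mathbf S_2$) is exactly the right one; it is the same device the paper uses for Theorem~\ref{th:abovesquareroot}, and your edge computation $\Exp X_{uv}=\Pr(u\in\mathbf S_1)\Pr(v\in\mathbf S_2)+\Pr(v\in\mathbf S_1)\Pr(u\in\mathbf S_2)\ge 2/k^2$ is correct, including the observation that stability forces the two events to be disjoint and the endpoints onto opposite sides. The gap is in the vertex count. Property ${\rm Q}^*_{1/k}$ gives only a \emph{lower} bound $\Pr(v\in\mathbf S)\ge 1/k$, so the claim $\Exp|\mathbf S_1\cup\mathbf S_2|\le 2n/k$ does not follow and is false in general; the fallback $\le n$ only yields average degree $2d/k^2$, which is weaker than $d/k$ for $k>2$. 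Worse, for an arbitrary admissible distribution the inequality $\Exp\bigl[|E(\mathbf H)|\bigr]\ge\frac{d}{2k}\Exp\bigl[|V(\mathbf H)|\bigr]$ can genuinely fail: take $G=C_5\sqcup I$ with $I$ a large independent set and the distribution that always includes all of $I$ and a uniformly random maximum stable set of $C_5$; then $p_v=1$ on $I$, the vertex expectation is inflated, and the linear combination has negative expectation even though the theorem of course still holds for $G$.

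The missing idea is a normalisation of the fractional colouring before sampling. Given weights $x_S\ge 0$ on stable sets with $\sum_S x_S\le k$ and $\sum_{S\ni v}x_S\ge 1$, one may delete vertices from stable sets (splitting a set into two weighted copies where needed) so that every vertex is covered with total weight \emph{exactly} $1$; this keeps all sets stable and does not increase $X:=\sum_Sx_S\le k$. Normalising to a probability distribution then gives $\Pr(v\in\mathbf S)=1/X$ for \emph{every} $v$, whence $\Exp|E(\mathbf H)|=2m/X^2$ exactly and $\Exp(|\mathbf S_1|+|\mathbf S_2|)=2n/X$ exactly, so
\[
\Exp\Bigl[|E(\mathbf H)|-\tfrac{d}{2X}\bigl(|\mathbf S_1|+|\mathbf S_2|\bigr)\Bigr]=\frac{2m}{X^2}-\frac{d}{2X}\cdot\frac{2n}{X}=0 ,
\]
and some outcome has $|E(H)|\ge\frac{d}{2X}|S_1\cup S_2|\ge\frac{d}{2k}|V(H)|$, i.e.\ average degree at least $d/k$. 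With this one repair your write-up goes through; the remaining bookkeeping (overlap vertices isolated, factor of $2$ between edge density and average degree) is as you describe.
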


\noindent
There is always a subgraph whose minimum degree is at least half the graph's average degree. Thus 
%
the lower bound in Theorem~\ref{thm:main} follows from the upper bound in Theorem~\ref{thm:main,fractional}.

Our next task, and the main task of this section, is to prove near sharpness in Theorem~\ref{thm:main,fractional}.

However, instead of a dense bipartite \emph{induced subgraph}, one might be satisfied with a dense bipartite \emph{subgraph} where we only require that (at least) one of the two parts induces a stable set.
Given $G=(V,E)$, we call an induced subgraph $G'=(V',E')$ of $G$ {\em semi-bipartite} if it admits a partition $V'=V_1\cup V_2$ such that $V_1$ is a stable set of $G$, and we define the degree of a vertex of $G'$ with respect to the semi-bipartition as its degree in the bipartite subgraph $G[V_1,V_2]$ between $V_1$ and $V_2$ (and so we ignore any edges in $V_2$).
A version of Conjecture~\ref{conj:trianglebip} where `bipartite' is replaced by `semi-bipartite' is known~\cite{EKT19}.

In what follows we give near optimal constructions for Theorem~\ref{thm:main} not only for bipartite induced density, but also for semi-bipartite induced density.
The following result, an adaptation of work of Spencer~\cite{Spe77}, is central. It might also be possible to adapt an earlier construction due to Erd\H{o}s~\cite{Erd61}, but it would produce a construction comparable to Theorem~\ref{thm:main}.
Although we chose not to pursue it, we suspect that the outcome at the end of the triangle-free process 
 is significantly better, and optimal up to a constant factor.
For this reason, we did not optimise either of the constants below.

\begin{theorem}\label{thm:Spe77}
There exist constants $\delta,\gamma>0$ such that for every large enough $n$ there is a triangle-free graph on $n$ vertices with minimum degree at least $\delta\sqrt{n}$ that contains no semi-bipartite induced subgraph of minimum degree at least $\gamma\log n$.
\end{theorem}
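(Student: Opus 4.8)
The plan is to follow Spencer's probabilistic construction from~\cite{Spe77}, adapted so that we not only avoid triangles and control independence number, but more strongly exclude dense semi-bipartite induced subgraphs. Start with the random graph $G(N,p)$ on $N$ vertices with edge probability $p = c N^{-1/2}$ for a small constant $c>0$; here $N$ will be a constant factor times the target $n$. The two families of bad events are: (A) for each triple of vertices, the event that it forms a triangle; and (B) for each candidate semi-bipartite subgraph of minimum degree at least $\gamma\log N$, the event that it actually occurs as an induced subgraph. After applying the Lovász Local Lemma (in the general weighted form stated in the preliminaries) to show that with positive probability none of these events occur, we delete one vertex from each of the (few) remaining triangles and from any low-degree vertices; a standard concentration argument via the Chernoff bounds shows that whp $G(N,p)$ has minimum degree $(1+o(1))pN = (1+o(1))c\sqrt{N}$, so after the bounded deletions we retain $n \ge N/2$ vertices with minimum degree at least $\delta\sqrt{n}$ for a suitable $\delta = \delta(c)$, while the no-dense-semi-bipartite-subgraph property is monotone under vertex deletion and hence survives.

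The combinatorial heart is bounding the probability of the type-(B) events and checking the Local Lemma inequalities. Fix a potential semi-bipartite induced subgraph on vertex set $V_1 \cup V_2$ with $V_1$ stable of size $a$ and $V_2$ of size $b$, every vertex having at least $t := \gamma\log N$ neighbours across the bipartition. The minimum-degree condition forces $a,b \ge t$ and forces at least $\max\{a,b\}\cdot t$ cross-edges to be present; moreover $V_1$ must be stable, costing a factor $(1-p)^{\binom a2}$. The dominant term is $p^{\#\text{edges forced present}}$, and since each of the $a$ vertices in $V_1$ needs $\ge t$ neighbours in $V_2$ we get at least $at$ forced edges (symmetrically $bt$), so $\Pr \le p^{\,t\max\{a,b\}}$ up to the stability factor. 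Summing over all choices of $V_1,V_2$ contributes a union-bound factor of at most $\binom{N}{a}\binom{N}{b} \le N^{a+b} \le N^{2\max\{a,b\}}$. With $p = cN^{-1/2}$ we have $p^{t\max\{a,b\}} = (c N^{-1/2})^{t\max\{a,b\}}$, and choosing $\gamma$ large enough that $t/2 = (\gamma/2)\log N$ dominates the logarithmic contribution $2\log N$ from the binomials (i.e. $\gamma > 4$, up to lower-order corrections from $c$), the sum over all $a,b \ge t$ of these probabilities is $o(1)$ — in fact exponentially small — which is more than enough room to satisfy the Local Lemma conditions alongside the triangle events.

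For the dependency structure in the Local Lemma: each triangle event and each semi-bipartite event depends only on a bounded set of edge slots, and two events are dependent only if their underlying edge sets intersect. A triangle event on a triple $T$ shares an edge with only $O(N)$ other triangle events and with semi-bipartite events supported on vertex sets meeting $T$; a semi-bipartite event on $S = V_1 \cup V_2$ of size $s = a+b$ intersects other events supported on vertex sets sharing a vertex with $S$. Assigning weights $y_i$ to type-(A) events and type-(B) events (roughly $y = 2$ for triangles and $y$ slightly larger than $1$ or a polynomial in $N$ for the rare type-(B) events), the inequality $\log y_i > \sum_{A_j \in \mathcal{D}_i} y_j \Pr(A_j)$ reduces to two bounds: the contribution of triangle events to any dependency neighbourhood is $O(N)\cdot 2 \cdot p^3 = O(N\cdot N^{-3/2}) = O(N^{-1/2}) = o(1)$, and the contribution of type-(B) events is bounded by the exponentially small tail computed above. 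So both inequalities hold comfortably for large $N$. The main obstacle is purely bookkeeping: organising the double sum over $(a,b)$ and over which vertices lie in $S$ so that the "number of forced present edges" lower bound $t\max\{a,b\}$ is applied correctly and uniformly, and verifying that the stability requirement on $V_1$ is not needed to make the union bound converge (it only helps). Once the exponent of $p$ is seen to beat the entropy $N^{s}$ of choosing $S$ — which happens precisely because $p$ is polynomially small in $N$ while $t = \Theta(\log N)$ — the rest is routine. I would present the argument with $\gamma$ and $\delta$ left as unoptimised constants, as the authors indicate.
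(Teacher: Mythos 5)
Your overall architecture (random graph $G(N,p)$ with $p=\Theta(N^{-1/2})$ plus the weighted Local Lemma) matches the paper, but the treatment of the dense semi-bipartite events has a genuine gap. The bound $\Pr \le p^{\,t\max\{a,b\}}$ is not a valid estimate for the event ``every vertex of $V_1\cup V_2$ has at least $t$ cross-neighbours'': that event is not the presence of one fixed edge set, and the correct first-moment bound carries a factor accounting for \emph{which} neighbours, e.g.\ $\prod_{v\in V_1}\Pr(\Bin(b,p)\ge t)\le \bigl(\binom{b}{t}p^t\bigr)^a$. When the larger part has size $b=\omega(\sqrt N\log N)$ one has $bp\gg t=\gamma\log N$, so $\Pr(\Bin(b,p)\ge t)$ is close to $1$ and the event is essentially certain rather than rare. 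In particular your assertion that ``the stability requirement on $V_1$ is not needed to make the union bound converge'' is exactly backwards: with high probability $G(N,cN^{-1/2})$ \emph{does} contain two disjoint linear-sized parts with all cross-degrees of order $\sqrt N\gg\gamma\log N$, so without using stability the events you are summing include near-certain ones and no choice of $\gamma$ saves the computation. Stability of $V_1$ is the crucial ingredient, and it has to be exploited structurally, not just as a multiplicative factor $(1-p)^{\binom a2}$ (which does not beat the entropy $\binom{N}{b}$ of choosing a huge $V_2$ when $a$ is only polylogarithmic or of order $\sqrt N\,\mathrm{polylog}$).

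The paper's proof supplies exactly the two missing ideas. First, it includes as a separate bad-event type the existence of a stable set of size $t=c_2\sqrt n\log n$; this caps the stable side of any semi-bipartite subgraph at $t$. Second, it invokes Proposition~\ref{prop:reduction} to pass from an arbitrary semi-bipartite subgraph of minimum degree $\gamma\log n$ to two disjoint \emph{equal-sized} sets of some size $i\le t$ whose cut has average degree at least $\beta\log n$ (losing only a factor $2$), so the only cut events that need to be excluded are the events $\mathrm{D}_i$ for $\beta\log n\le i\le t$. In that range $ip\le tp=O(\log n)$, so $\Pr(\Bin(i^2,p)\ge\beta i\log n)\le e^{-\beta i\log n}$ by the second Chernoff bound, which comfortably beats the entropy $\binom{n}{i}^2\le e^{2i\log n}$ once $\beta$ is large; this is the step your estimate was meant to replace. (An alternative repair, closer to your setup, is to always apply the degree conditions from the larger part into the smaller part and use the no-large-stable-set event to guarantee the smaller part has size less than $t$.) A further minor point: the minimum-degree guarantee should be folded into the Local Lemma as its own event type, as the paper does, rather than invoked as a separate with-high-probability statement to be intersected with the (possibly tiny) positive-probability event produced by the Local Lemma.
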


\noindent
Before proving this, let us see how it implies the second part of Theorem~\ref{thm:main}.

\begin{proof}[Proof of sharpness in Theorem~\ref{thm:main}]
Let $j$ be the smaller of $n$ and the least value for which $2\sqrt{j}\ge \delta n/d$.
Provided $j$ is large enough, we may by Theorem~\ref{thm:Spe77} fix a triangle-free graph $\hat{G}$ that has minimum degree at least $\delta\sqrt{j}$ that contains no semi-bipartite induced subgraph of minimum degree at least $\gamma\log j$. Consider a new graph $G$ formed from $\hat{G}$ by replacing each vertex by a stable set of size $\lfloor n/j \rfloor$, and adding a complete bipartite graph between every pair of stable sets that corresponds to an edge in $\hat{G}$.
Note that $G$ is a triangle-free graph on $j$ between $n/2$ and $n$ vertices with minimum degree at least $\delta\sqrt{j}\lfloor n/j \rfloor \ge d$. Moreover, the largest minimum degree of a semi-bipartite induced subgraph in $G$ is smaller than $\gamma\lfloor n/j \rfloor\log j \le  \gamma \lceil\delta^2d^2/(16n)\rceil\log n$.
\end{proof}

For Theorem~\ref{thm:Spe77} we will need the convenient observation that, if we do not mind constant factors, it suffices to consider only semi-bipartite induced subgraphs with both parts of equal size.

\begin{proposition}[\cite{EKT19}]\label{prop:reduction}
Suppose $A,B\subseteq G$ are disjoint with $|A|\ge |B|$ and satisfy that the average degree of $G[A,B]$ is $d$.
Then there exists $A'\subseteq A$ with $|A'|=|B|$ such that the average degree of $G[A',B]$ is at least $d/2$.
\end{proposition}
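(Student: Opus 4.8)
The plan is to select $A'$ by a one-line averaging argument. First I would fix notation: write $e := e(A,B)$ for the number of edges of $G$ between $A$ and $B$. Recalling that the average degree of a bipartite graph is twice its number of edges divided by the total number of vertices in the two parts, the hypothesis reads $2e/(|A|+|B|) = d$, i.e.\ $e = \tfrac12 d(|A|+|B|)$. Now for \emph{any} $A' \subseteq A$ with $|A'| = |B|$, the bipartite graph $G[A',B]$ has exactly $|A'| + |B| = 2|B|$ vertices, so its average degree is $2e(A',B)/(2|B|) = e(A',B)/|B|$. Hence it suffices to produce $A' \subseteq A$ with $|A'| = |B|$ and $e(A',B) \ge \tfrac12 d|B|$.

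To find such an $A'$, I would order the vertices of $A$ by their number of neighbours in $B$ and take $A'$ to be the $|B|$ vertices of largest such degree; this is where the assumption $|A| \ge |B|$ is used. Since $A'$ consists of a $|B|/|A|$ fraction of the vertices of $A$, chosen to be those of largest degree into $B$, it captures at least a $|B|/|A|$ fraction of the edges, so $e(A',B) \ge \tfrac{|B|}{|A|}\, e$. (Equivalently, a uniformly random $A'$ of size $|B|$ has $\Exp[e(A',B)] = \tfrac{|B|}{|A|} e$, and one takes a best outcome.) Substituting $e = \tfrac12 d(|A|+|B|)$ gives
\[
e(A',B) \;\ge\; \frac{|B|}{|A|}\cdot\frac{d(|A|+|B|)}{2} \;=\; \frac{d|B|}{2}\cdot\frac{|A|+|B|}{|A|} \;\ge\; \frac{d|B|}{2},
\]
using $|A|+|B|\ge|A|$. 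By the reduction in the previous paragraph, $G[A',B]$ then has average degree at least $d/2$, as required.

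There is essentially no serious obstacle here; the only points needing a moment's care are the bookkeeping of what "average degree of a bipartite graph" means (twice the edge count over the total vertex count, which is why a factor $2$ appears on both sides and cancels), and pinning down exactly where $|A| \ge |B|$ is invoked — namely only to ensure that a subset of $A$ of size precisely $|B|$ exists. It is worth remarking that the factor $\tfrac12$ loss is intrinsic to this argument: the inequality $|A|+|B| \ge |A|$ is nearly tight when $|B|$ is much smaller than $|A|$, so no better universal constant than $1/2$ is available by this method.
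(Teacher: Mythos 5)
Your proof is correct; the paper itself states this proposition without proof (citing~\cite{EKT19}), and your argument --- take the $|B|$ vertices of $A$ with most neighbours in $B$ (or a uniformly random $|B|$-subset) so that $e(A',B)\ge \tfrac{|B|}{|A|}e(A,B)$, then unwind the definition of average degree --- is exactly the standard averaging argument used there. The bookkeeping of the factor $2$ and the use of $|A|+|B|\ge |A|$ are both handled correctly.
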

%

\begin{proof}[Proof of Theorem~\ref{thm:Spe77}]
For a sufficiently large positive integer $n$,
let $p = c_1/\sqrt{n}$ and $t=c_2 \sqrt{n}\log n$ for some fixed $c_1,c_2>0$.
Consider the binomial random graph $G(n,p)$.
Fix $0<\alpha<1$ and $\beta >0$.
The constants $c_1,c_2,\alpha,\beta$ will be specified more precisely later in the proof.

With a view to applying the General Local Lemma (as stated in Subsection~\ref{sub:probabilistic}), let us define four types of (bad) events in $G(n,p)$.
\begin{enumerate}
\renewcommand\labelenumi{\Alph{enumi}}
\item
For a set of three vertices, it induces a triangle.
\item
For a set of $t$ vertices, it induces a stable set.
\item
For a single vertex, it has degree at most $(1-\alpha) n p$.
\item[D$_i$]
For two disjoint sets of $i$ vertices, the bipartite subgraph induced by the cut between the two sets has average degree at least $\beta \log n$.
\end{enumerate}
Note that by Proposition~\ref{prop:reduction}, the fact that average degree is at least minimum degree, and the choice of $p$, we obtain the desired graph if there is an element of the probability space $G(n,p)$ for which no event of Types~A,~B,~C, and~D$_i$, $\beta \log n \le i \le t$, occur.

By an abuse of notation, let us write $P(*)$ for the probability of an event of Type~$*$. To justify this abuse, observe that all events of Type~$*$ have the same probability of occurrence.
We have that $P(\text{A})=p^3$
and $P(\text{B})=(1-p)^{\binom{t}{2}} < \exp\left(-p\binom{t}{2}\right)$.
Note that $P(\text{C}) = \Pr(\Bin(n,p) \le (1-\alpha)np) \le 2\exp(-\alpha^2np/3)$ by a Chernoff Bound (as stated in Subsection~\ref{sub:probabilistic}).
Since $ip \le tp \le c_1c_2\log n$, $P(\text{D}_i) = \Pr(\Bin(i^2,p) \ge \beta i \log n) \le \exp(-\beta i \log n)$ for all $i\le t$ by a Chernoff Bound with a choice of $\beta$ satisfying
\begin{align}\label{eqn:1}
\beta\ge 7 c_1c_2.
\end{align}

For Types~$*$ and~$*'$, 
let us write $N(*,*')$ for the least such number for which each Type~$*$ event is mutually independent of all but $N(*,*')$ events of Type~$*'$.
We have that $N(\text{A},\text{A}) =3(n-3)<3n$,
$N(\text{B},\text{A}) = \binom{t}{2}(n-t)+\binom{t}{3} < t^2n/2$, 
$N(\text{C},\text{A}) = \binom{n-1}{2} < n^2/2$, and
$N(\text{D}_i,\text{A}) < i^2 n$.
More crudely, we can simply bound $N(*,*')$ by the number of Type~$*'$ events.
This yields
$N(\text{A},\text{B})$, $N(\text{B},\text{B})$, $N(\text{C},\text{B})$, and $N(\text{D}_i,\text{B})$ are all at most $\binom{n}{t}<(en/t)^t=\exp(t\log(en/t))$;
$N(\text{A},\text{C})$, $N(\text{B},\text{C})$, $N(\text{C},\text{C})$, and $N(\text{D}_i,\text{C})$ are all at most $n$; and
$N(\text{A},\text{D}_j)$, $N(\text{B},\text{D}_j)$, $N(\text{C},\text{D}_j)$, and $N(\text{D}_i,\text{D}_j)$ are all at most $\binom{n}{2j}<(en/2j)^{2j}=\exp(2j\log(en/2j))$,

By the General Local Lemma, we only need to find, for $\beta \log n \le i \le t$, positive reals $Y(\text{A})$, $Y(\text{B})$, $Y(\text{C})$, and $Y(\text{D}_i)$ such that 
the following inequalities hold:
\begin{align*}
1 >
& Y(\text{A})P(\text{A}), Y(\text{B})P(\text{B}),Y(\text{C})P(\text{C}),Y(\text{D}_i)P(\text{D}_i); \\
\log Y(\text{A}) > 
& Y(\text{A})P(\text{A})N(\text{A},\text{A}) + Y(\text{B})P(\text{B})N(\text{A},\text{B})+\\
& +Y(\text{C})P(\text{C})N(\text{A},\text{C})+\sum_j Y(\text{D}_j)P(\text{D}_j)N(\text{A},\text{D}_j);\\
\log Y(\text{B}) > 
& Y(\text{A})P(\text{A})N(\text{B},\text{A}) + Y(\text{B})P(\text{B})N(\text{B},\text{B})+\\
& +Y(\text{C})P(\text{C})N(\text{B},\text{C})+\sum_j Y(\text{D}_j)P(\text{D}_j)N(\text{B},\text{D}_j);\\
\log Y(\text{C}) > 
& Y(\text{A})P(\text{A})N(\text{C},\text{A}) + Y(\text{B})P(\text{B})N(\text{C},\text{B})+\\
& +Y(\text{C})P(\text{C})N(\text{C},\text{C})+\sum_j Y(\text{D}_j)P(\text{D}_j)N(\text{C},\text{D}_j);\quad\text{and}\\
\log Y(\text{D}_i) > 
& Y(\text{A})P(\text{A})N(\text{D}_i,\text{A}) + Y(\text{B})P(\text{B})N(\text{D}_i,\text{B})+\\
& +Y(\text{C})P(\text{C})N(\text{D}_i,\text{C})+\sum_j Y(\text{D}_j)P(\text{D}_j)N(\text{D}_i,\text{D}_j).
\end{align*}
The estimates that we derived earlier imply that it instead suffices to find positive reals $Y(\text{A})$, $Y(\text{B})$, $Y(\text{C})$, and $Y(\text{D}_i)$ for which the following hold:
\begin{align*}
&1 >
 Y(\text{A})p^3, \frac{Y(\text{B})}{\exp\left(p\binom{t}{2}\right)},\frac{2Y(\text{C})}{\exp(\alpha^2np/3)},\frac{Y(\text{D}_i)}{\exp(\beta i \log n)}; \\
&Z<\log Y(\text{A}) - Y(\text{A})p^3\cdot3n;\\
&Z<\log Y(\text{B}) - Y(\text{A})p^3t^2n/2;\\
&Z<\log Y(\text{C}) - Y(\text{A})p^3n^2/2;\text{ and}\\
&Z<\log Y(\text{D}_i) - Y(\text{A})p^3i^2/n;
\end{align*}
where
\begin{align*}
Z=Y(\text{B})\exp\left(-p\binom{t}{2}+t\log\frac{en}{t}\right)+Y(\text{C})\cdot 2\exp(-\alpha^2np/3)\cdot n+\\
 +\sum_j Y(\text{D}_j)\exp(-\beta j \log n+2j\log(en/2j)).
\end{align*}
Let us choose $Y(\text{A}) = 1+\eps$, $Y(\text{B}) = \exp(c_3\sqrt{n}\log^2 n)$, $Y(\text{C})=\exp(c_4\sqrt{n})$, and $Y(\text{D}_i)=\exp(c_5i\log n)$ for some fixed $\eps,c_3,c_4,c_5>0$.
Note by the choices of $p$ and $t$ that the first row of inequalities are easily satisfied.

For the remaining rows, we first consider the asymptotic behaviour of the three constituents of $Z$ as $n\to\infty$. For the first two, we have
\begin{align*}
Y(\text{B})\exp\left(-p\binom{t}{2}+t\log\frac{en}{t}\right)
& = \exp((c_3-c_1c_2^2/2+c_2+o(1))\sqrt{n}\log^2 n)
\\
\text{and } Y(\text{C})\cdot 2\exp(-\alpha^2np/3)\cdot n
& = \exp((c_4-\alpha^2c_1/3 +o(1))\sqrt{n}).
\end{align*}
For the third, note since $\beta \log n \le j \le t$ that
\begin{align*}
&\sum_j Y(\text{D}_j)\exp(-\beta j \log n+2j\log(en/2j))\\
& \le \sum_j \exp((c_5-\beta+2+o(1))j\log n)
 \le \exp((c_5-\beta+2+o(1))\beta\log^2 n).
\end{align*}
We may therefore conclude that $Z$ is superpolynomially small in $n$ provided
\begin{align}
c_3-c_1c_2^2/2+c_2 & < 0,\label{eqn:2}\\
c_4-\alpha^2c_1/3 & < 0,\text{ and} \label{eqn:3}\\
c_5-\beta+2 & < 0. \label{eqn:4}
\end{align}

Of the remaining terms in the inequalities required for the application of the General Local Lemma, the critical ones can be seen to be polylogarithmic or greater in magnitude (as $n\to\infty$):
\begin{align*}
&\log Y(\text{B}) - Y(\text{A})p^3t^2n/2 = (c_3-(1+\eps)c_1^3c_2^2/2+o(1))\sqrt{n}\log^2 n;\\
&\log Y(\text{C}) - Y(\text{A})p^3n^2/2 = (c_4-(1+\eps)c_1^3/2+o(1))\sqrt{n};\text{ and}\\
&\log Y(\text{D}_i) - Y(\text{A})p^3i^2/n \ge (c_5-(1+\eps)c_1^3c_2)i\log n
\end{align*}
(where we used $i\le c_2\sqrt{n}\log n$ in the last line).

We therefore also want that
\begin{align}
c_3-(1+\eps)c_1^3c_2^2/2 & > 0;\label{eqn:5}\\
c_4-(1+\eps)c_1^3/2 & > 0;\text{ and}\label{eqn:6}\\
c_5-(1+\eps)c_1^3c_2 & > 0.\label{eqn:7}
\end{align}

It remains only to show that there is some choice of $c_1,\dots,c_5,\alpha,\beta,\eps$ so that~\eqref{eqn:1}--\eqref{eqn:7} are fulfilled.
Note that, whatever the other choices, the inequalities~\eqref{eqn:1},~\eqref{eqn:4}, and~\eqref{eqn:7} are satisfied with a sufficiently large choice of $\beta$ or of $c_5$.
By pairing inequalities~\eqref{eqn:2} and~\eqref{eqn:5} as well as~\eqref{eqn:3} and~\eqref{eqn:6}, we need
\begin{align*}
c_2(c_1c_2/2-1) > c_3 & > (1+\eps)c_1^3c_2^2/2\text{ and}\\
\alpha^2c_1/3 > c_4 & > (1+\eps)c_1^3/2.
\end{align*}
We have that 
\begin{align*}
c_2(c_1c_2/2-1) > (1+\eps)c_1^3c_2^2/2 &\iff c_2 > 2/(c_1-(1+\eps)c_1^3)\text{ and}\\
\alpha^2c_1/3 > (1+\eps)c_1^3/2 &\iff \alpha > \sqrt{3(1+\eps)/2}\cdot c_1.
\end{align*}
Let us then fix $c_1 = 1/\sqrt{3}$. Therefore with a small enough choice of $\eps >0$ it is possible to choose, say, $c_2=21/4$ and $\alpha=3/4$ ($<1$ particular) and then take, say, $c_3=0.51$ and $c_4=0.97$. 
This completes the proof.
\end{proof}

\section{Fractional colouring revisited}\label{sec:redux}

In this section, we make a first step towards optimising the asymptotic constant for the first term in Theorem~\ref{thm:main,fractional}.
It turns out that this is related to two problems of Erd\H{o}s and Hajnal~\cite{ErHa85}, concerning the asymptotic order of the chromatic number of a triangle-free graph with a given number of vertices or edges. 
In terms of edges, the correct order upper bound was first shown by Poljak and Tuza~\cite{PoTu94}.
Matching lower bounds to settle both problems were established as byproduct to the determination of the asymptotic order of the Ramsey numbers $R(3,t)$ by Kim~\cite{Kim95}, cf.~\cite{GiTh00,Nil00}.

For completeness we reiterate more precisely the observation of Erd\H{o}s and Hajnal~\cite{ErHa85} mentioned in the proof of Theorem~\ref{thm:main,fractional} (see also~\cite[pp.~124--5]{JeTo95} and~\cite{Kim95}).
An application of Shearer's lower bound on the stability number~\cite{She83} in a greedy colouring procedure bounds the chromatic number by at most $(4+o(1))$ times optimal (as certified by the triangle-free process~\cite{BoKe13+,FGM20}).

\begin{lemma}[cf.~Jensen and Toft~\cite{JeTo95}]\label{lem:JeTo95}
Let $\cal G$ be a class of graphs that is closed under vertex-deletion. Suppose that for some $x_0 \ge 2$ there is a continuous, non-decreasing function $f_{\cal G}: [x_0,\infty) \to {\mathbb R}^+$ such that every $G'\in \cal G$ on $x \ge x_0$ vertices has a stable set of at least $f_{\cal G}(x)$ vertices.
Then every $G\in \cal G$ on $n \ge x_0$ vertices has chromatic number at most
\begin{align*}
x_0+ \int_{x_0}^n \frac{dx}{f_{\cal G}(x)}.
\end{align*}
\end{lemma}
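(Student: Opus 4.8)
The plan is to prove this by a greedy colouring argument in which we repeatedly remove a large stable set, using the hypothesised lower bound $f_{\cal G}$ on stability numbers and tracking how the vertex count decreases. First I would set up the iteration: starting from $G_0 = G$ on $n$ vertices, as long as the current graph $G_k$ has more than $x_0$ vertices we invoke the hypothesis to find a stable set $S_k$ in $G_k$ of size at least $f_{\cal G}(n_k)$, where $n_k = |V(G_k)|$, and set $G_{k+1} = G_k - S_k$; this is legitimate because $\cal G$ is closed under vertex-deletion. Each $S_k$ becomes one colour class, and once at most $x_0$ vertices remain we finish off with at most $x_0$ further colours (one per remaining vertex). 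So the total number of colours is at most $x_0$ plus the number of iterations $T$ needed to bring the vertex count from $n$ down to $x_0$.

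The heart of the argument is then to bound $T$ (more precisely, the sum accumulating one unit per removed colour class) by the stated integral $\int_{x_0}^n dx / f_{\cal G}(x)$. The clean way is to compare the discrete process to the integral: since $n_{k+1} \le n_k - f_{\cal G}(n_k)$ and $f_{\cal G}$ is non-decreasing, for every real $x \in (n_{k+1}, n_k]$ we have $f_{\cal G}(x) \le f_{\cal G}(n_k)$, hence
\[
1 \le \frac{n_k - n_{k+1}}{f_{\cal G}(n_k)} \le \int_{n_{k+1}}^{n_k} \frac{dx}{f_{\cal G}(x)}.
\]
Summing this over all iterations $k = 0, 1, \dots, T-1$ telescopes the right-hand side into $\int_{n_T}^{n} dx / f_{\cal G}(x) \le \int_{x_0}^{n} dx / f_{\cal G}(x)$, using $n_T \ge x_0$ and positivity of the integrand. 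Thus $T \le \int_{x_0}^n dx/f_{\cal G}(x)$, and adding the $x_0$ colours used on the leftover vertices gives the claimed bound $\chi(G) \le x_0 + \int_{x_0}^n dx/f_{\cal G}(x)$.

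A couple of small points need care rather than real difficulty. One should make sure the process is well-defined: if at some stage $n_k \le x_0$ we stop, and the claim that at most $x_0$ vertices remain is exactly $n_T \le x_0$; one must also note $f_{\cal G}(n_k) \ge 1$ is not needed, but we do need $f_{\cal G} > 0$ (which is given, as $f_{\cal G}$ maps into ${\mathbb R}^+$) so that removing $S_k$ strictly decreases the count and the integrand is finite. Continuity of $f_{\cal G}$ is what guarantees the integral $\int_{n_{k+1}}^{n_k} dx/f_{\cal G}(x)$ makes sense (Riemann-integrability of a continuous positive function on a closed interval). The main — though still mild — obstacle is simply getting the integral-comparison inequality pointed in the right direction: one must use monotonicity of $f_{\cal G}$ to replace $f_{\cal G}(n_k)$ by the larger value $f_{\cal G}(x)$ under the integral sign on $(n_{k+1}, n_k]$, i.e. $f_{\cal G}(x) \le f_{\cal G}(n_k)$ there, which is the correct direction precisely because we want a lower bound on $\int dx/f_{\cal G}(x)$ to dominate the count of $1$'s. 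No probabilistic input is needed; this is a purely deterministic averaging/telescoping argument.
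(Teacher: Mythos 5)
Your overall strategy --- greedily extract a stable set of size at least $f_{\cal G}(n_k)$ from the current graph and compare the number of extractions to the integral via the monotonicity of $f_{\cal G}$ --- is the standard proof of this lemma (the paper only cites Jensen and Toft for it), and your comparison inequality $1 \le (n_k-n_{k+1})/f_{\cal G}(n_k) \le \int_{n_{k+1}}^{n_k} dx/f_{\cal G}(x)$ points in the correct direction. But there is an off-by-one gap at the final step of the telescoping. Your stopping rule is to iterate while $n_k > x_0$, so upon termination $n_T \le x_0$, and in general $n_T < x_0$; the assertion ``using $n_T \ge x_0$'' contradicts your own stopping condition. For the last iteration $k=T-1$ the integral $\int_{n_T}^{n_{T-1}} dx/f_{\cal G}(x)$ is not even defined ($f_{\cal G}$ lives on $[x_0,\infty)$), and the part that is defined, $\int_{x_0}^{n_{T-1}} dx/f_{\cal G}(x)$, can be far smaller than $1$. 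Concretely, take $x_0=10$ and $f_{\cal G}(x)=x-1$ (admissible, e.g., for the class of graphs with at most one edge): for $n=11$ one extraction suffices, so $T=1$, yet $\int_{10}^{11} dx/(x-1)=\log(10/9)<1$, so your intermediate claim $T\le\int_{x_0}^{n} dx/f_{\cal G}(x)$ is false. What the telescoping actually yields is $T-1\le\int_{x_0}^{n} dx/f_{\cal G}(x)$ (summing only over $k=0,\dots,T-2$, where $n_{k+1}>x_0$), and combined with your crude count of $x_0$ colours for the leftover vertices this proves only $\chi(G)\le x_0+1+\int_{x_0}^{n} dx/f_{\cal G}(x)$, one more than claimed.

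The repair is to charge the last extraction to the additive $x_0$ term rather than to the integral, which requires counting the leftover vertices exactly instead of bounding them by $x_0$. Iterate while $n_k\ge x_0$ (the hypothesis permits this); upon termination $n_T<x_0$, so the remainder needs only $n_T\le\lceil x_0\rceil-1$ colours, and the total is $(T-1)+(1+n_T)\le \int_{x_0}^{n} dx/f_{\cal G}(x)+\lceil x_0\rceil$. For integer $x_0$ this is exactly the stated bound; for non-integer $x_0$ there remains a residual slack of $\lceil x_0\rceil-x_0<1$, which is harmless in the paper's asymptotic application (Corollary~\ref{cor:She83}) but worth acknowledging if you want the lemma verbatim.
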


\begin{corollary}\label{cor:She83}
As $n\to\infty$, any triangle-free graph on $n$ vertices has chromatic number at most $(2\sqrt{2}+o(1))\sqrt{n/\log n}$.
\end{corollary}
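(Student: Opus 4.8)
The plan is to feed Shearer's stability-number bound into the greedy colouring estimate of Lemma~\ref{lem:JeTo95}, applied to the class $\mathcal{G}$ of all triangle-free graphs, which is closed under vertex-deletion. Recall that Shearer~\cite{She83} proved $R(3,t)\le(1+o(1))t^2/\log t$; equivalently, for every $\eps>0$ there is an integer $x_0=x_0(\eps)\ge 2$ such that every triangle-free graph on $x\ge x_0$ vertices contains a stable set of size at least $(1/\sqrt 2-\eps)\sqrt{x\log x}$. The passage between these two formulations is routine: if a triangle-free graph on $x$ vertices has stability number $a$, then $x<R(3,a+1)\le(1+o(1))a^2/\log a$, so $a^2\ge(1-o(1))x\log a$; since this already forces $a\ge(1-o(1))\sqrt{x}\to\infty$ we get $\log a\ge(1/2-o(1))\log x$, and either $a\ge\sqrt{x\log x}$ outright, or else $a<\sqrt{x\log x}$ pins down $\log a=(1/2+o(1))\log x$ and hence $a\ge(1/\sqrt 2-o(1))\sqrt{x\log x}$.

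Fixing $\eps>0$ and the corresponding $x_0$, I would set $f_{\mathcal{G}}(x)=(1/\sqrt 2-\eps)\sqrt{x\log x}$, which is continuous and non-decreasing on $[x_0,\infty)$. Lemma~\ref{lem:JeTo95} then gives, for every triangle-free graph $G$ on $n\ge x_0$ vertices,
\[
\chi(G)\le x_0+\frac{1}{1/\sqrt 2-\eps}\int_{x_0}^n\frac{dx}{\sqrt{x\log x}}.
\]

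The remaining task is purely analytic: show that $\int_{x_0}^n(x\log x)^{-1/2}\,dx=(2+o(1))\sqrt{n/\log n}$ as $n\to\infty$. Since this integral tends to infinity, the estimate follows from L'Hôpital's rule once one checks that $\frac{d}{dn}\bigl(2\sqrt{n/\log n}\bigr)=(n\log n)^{-1/2}(1-1/\log n)$, whose quotient with the integrand $(n\log n)^{-1/2}$ tends to $1$. Substituting, $\chi(G)\le x_0+\frac{2+o(1)}{1/\sqrt 2-\eps}\sqrt{n/\log n}=\frac{2\sqrt 2}{1-\sqrt 2\,\eps}(1+o(1))\sqrt{n/\log n}$ as $n\to\infty$, the additive constant $x_0$ being absorbed into the $o(1)$. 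Letting $\eps\to0$ then yields $\chi(G)\le(2\sqrt 2+o(1))\sqrt{n/\log n}$.

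The only genuine subtlety is the very first step — invoking Shearer's result with the correct leading constant $1/\sqrt 2$ and recasting it in the per-vertex form required by Lemma~\ref{lem:JeTo95} — together with keeping track of the two nested limits (first $n\to\infty$ with $\eps$ fixed, then $\eps\to0$). Both the integral asymptotics and the application of Lemma~\ref{lem:JeTo95} are otherwise routine.
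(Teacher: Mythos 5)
Your proposal is correct and follows exactly the paper's route: applying Lemma~\ref{lem:JeTo95} with Shearer's bound $f_{\mathcal{G}}(x)=(1/\sqrt{2}-\eps)\sqrt{x\log x}$ and then evaluating the integral asymptotically. The paper leaves the derivation of the per-vertex form of Shearer's result and the limit computation as exercises, which you carry out correctly.
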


\begin{proof}
Shearer~\cite{She83} showed that, for any $\eps > 0$, there exists $x_0 \ge 2$ such that the function $f_{\cal G}(x) = (1/\sqrt{2}-\eps)\sqrt{x \log x}$ satisfies the hypothesis of Lemma~\ref{lem:JeTo95} for $\cal G$ being the class of triangle-free graphs.
Lemma~\ref{lem:JeTo95} yields the desired outcome after an exercise in analysis to show that
\[
\lim_{n\to \infty} \frac{\int_{x_0}^n dx/\sqrt{x\log x}}{\sqrt{n/\log n}} = 2. \qedhere
\]
\end{proof}

Is the factor $(2+o(1))$ contribution from the above limit truly necessary? We were unable to address this issue, but Theorem~\ref{thm:redux} shows that it is possible to reduce the bound by a factor $(\sqrt{2}+o(1))$ if we only wish to bound the fractional chromatic number. 
Note that definitive progress on whether it is possible to improve by {\em strictly} more than a factor $(2+o(1))$ in Corollary~\ref{cor:She83}, for fractional or not, either positively or negatively, would likely constitute a major breakthrough in combinatorics.
A factor $(2+o(1))$ improvement is indeed plausible, especially for fractional.

\begin{conjecture}\label{conj:redux}
As $n\to\infty$, any triangle-free graph on $n$ vertices has fractional chromatic number at most $(\sqrt{2}+o(1))\sqrt{n/\log n}$.
\end{conjecture}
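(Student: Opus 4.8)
\medskip
\noindent\textbf{Proof proposal.}
The plan is to refine the argument behind Theorem~\ref{thm:redux}, which combines a neighbourhood-sampling distribution with Molloy's colouring theorem~\cite{Mol17+}. Let $G$ be a triangle-free graph on $n$ vertices, fix a degree threshold $D=D(n)=\Theta(\sqrt{n\log n})$ to be optimised, and split $V(G)$ into $H=\{v:\deg(v)\ge D\}$ and $L=\{v:\deg(v)<D\}$. As in the proof of Theorem~\ref{thm:main,fractional}, a uniformly random full neighbourhood of $G$ is a stable set and contains a fixed vertex $v$ with probability $\deg(v)/n$; allotting fractional weight $n/D$ to this distribution covers every $v\in H$ completely and covers every $v\in L$ to the extent $\deg(v)/D$. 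Thus what remains is to satisfy, using stable sets of $G[L]$ (which is triangle-free with maximum degree below $D$), the \emph{residual demands} $r_v:=1-\deg(v)/D$ on the low-degree vertices, and then to optimise $D$. The point is that $r_v$ is small precisely for those $v\in L$ whose degree is close to $D$ --- the expensive vertices --- so the residual problem should be far cheaper than a full fractional colouring of $G[L]$; but exploiting this requires a \emph{local} (degree-sensitive), list-type, fractional strengthening of the Shearer/Molloy bound, rather than a degree-insensitive bound on $\chi_f(G[L])$ such as the $(1+o(1))\tfrac{D}{\log D}$ from Molloy's theorem.

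Concretely, the ingredient one wants is a \emph{local fractional version of Shearer's independence bound}: every triangle-free graph $H$ admits a probability distribution over its stable sets under which $\Pr(v\in\mathbf S)\ge(1-o(1))\tfrac{\log\deg_H(v)}{\deg_H(v)}$ for every vertex $v$, where the $o(1)$ depends only on $\deg_H(v)$ --- for bounded-degree vertices the trivial fractional Caro--Wei bound $\Pr(v\in\mathbf S)\ge 1/(\deg_H(v)+1)$, realised by the random-permutation stable set, already suffices. Granting this with $H=G[L]$, the residual problem is solved using fractional weight at most $(1+o(1))\max_{v\in L}\tfrac{r_v\deg_{G[L]}(v)}{\log\deg_{G[L]}(v)}\le(1+o(1))\tfrac{D}{4\log D}$, the maximum being attained near degree $D/2$. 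Since $\log D=(\tfrac12+o(1))\log n$ for $D$ of the stated order, the neighbourhood-sampling and residual phases then cost $n/D$ and $(1+o(1))\tfrac{D}{2\log n}$ respectively; choosing $D\sim\sqrt{2n\log n}$ balances these at $(\tfrac1{\sqrt2}+o(1))\sqrt{n/\log n}$ apiece, for the desired total of $(\sqrt2+o(1))\sqrt{n/\log n}$. The leftover bookkeeping --- assembling the neighbourhood-sampling distribution and the residual distribution into one distribution over stable sets of $G$ and checking that every vertex receives total weight at least one --- is routine.

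The hard part is, unambiguously, establishing the local fractional strengthening of Shearer's bound with a good enough constant. This is at the frontier: since $\alpha(G)\ge n/\chi_f(G)$, the conjectured bound already implies that every triangle-free graph on $n$ vertices has a stable set of size $(\tfrac1{\sqrt2}+o(1))\sqrt{n\log n}$, which is precisely Shearer's bound~\cite{She83}, still the best known. Moreover the local, list-type, irregular form asked for is strictly stronger than the global consequence of Molloy's theorem, since a single distribution over stable sets must simultaneously give coverage of order $\tfrac{\log d}{d}$ to the vertices of every degree $d$. Proving it would presumably demand either a semi-random (``nibble'') construction of the stable-set distribution that tracks the full degree sequence, or an entropy/occupancy-type argument in the spirit of Shearer and of the analyses of the triangle-free process~\cite{BoKe13+,FGM20}. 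Note finally that establishing such a bound merely with the constant $1$ relaxed to any absolute $c<2$ would already improve Theorem~\ref{thm:redux} to $(\sqrt{2c}+o(1))\sqrt{n/\log n}$, so partial progress would be meaningful; but attaining the sharp value $c=1$ needed here --- the natural limit of this method, though still a factor $2$ away from the conjectured truth --- is where essentially all the difficulty lies.
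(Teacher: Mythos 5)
You are addressing Conjecture~\ref{conj:redux}, which the paper does not prove: it is stated as open, and the authors remark that improving Corollary~\ref{cor:She83} by strictly more than a factor $(2+o(1))$ ``would likely constitute a major breakthrough in combinatorics.'' Your proposal is therefore not a proof but a conditional reduction, and to your credit you say so explicitly. The reduction itself is sound as far as it goes: granting your hypothesised lemma, the residual cost is indeed $\max_{x<D}(1-x/D)\,x/\log x=(1+o(1))D/(4\log D)$, and with $\log D=(\tfrac12+o(1))\log n$ the balance $n/D+D/(2\log n)$ is minimised at $D\sim\sqrt{2n\log n}$, giving $(\sqrt2+o(1))\sqrt{n/\log n}$. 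But the entire difficulty sits in the hypothesised ingredient: a single distribution over stable sets of a triangle-free graph under which \emph{every} vertex $v$ simultaneously satisfies $\Pr(v\in\mathbf S)\ge(1-o(1))\log\deg(v)/\deg(v)$. Nothing in the literature you can invoke supplies this. Theorem~\ref{thm:Molloy} is governed by the maximum degree and gives no extra coverage to low-degree vertices, and the proof of Theorem~\ref{thm:redux} works around exactly this by layering the graph by degree, which is what costs the factor $2$ you are trying to remove. Your gain --- the factor $4$ in $D/(4\log D)$ --- comes entirely from the degree-sensitivity of the unproven lemma, so the gap in your argument is precisely coextensive with the content of the conjecture.

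It is worth noting that your route is essentially the one recorded in the paper's ``Notes added'': Kelly and Postle~\cite{KePo18+} proposed a local-demands strengthening of fractional colouring whose truth would imply Conjecture~\ref{conj:redux}, and your ``local fractional Shearer bound'' is an instance of that programme (with the additional subtlety that your application needs the sharp constant $1$, since, as you observe, even the global case of your lemma already recovers Shearer's bound~\cite{She83}, still the state of the art). So: correct identification of where the difficulty lies, correct bookkeeping around it, but no proof --- the key lemma is an open problem at least as strong as the best known independence-number bound for triangle-free graphs.
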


\noindent
Relatedly, in the spirit of~\cite{ErHa85}, we also conjecture the following. Like for the previous conjecture, this if true would be an analogue of Shearer's bound.
\begin{conjecture}\label{conj:redux,edges}
As $m\to\infty$, any triangle-free graph with $m$ edges has fractional chromatic number at most $(2^{4/3}+o(1))m^{1/3}/(\log m)^{2/3}$.
\end{conjecture}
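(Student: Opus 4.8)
\textbf{Proof proposal for Conjecture~\ref{conj:redux,edges}.}
This is a conjecture, so I cannot offer a proof; instead I will sketch the most promising line of attack, following the template that yielded Theorem~\ref{thm:redux} in terms of vertices. The plan is to replace the crude ``full neighbourhood'' distribution used in Theorem~\ref{thm:main,fractional} by a convex combination of two distributions, one that performs well on sparse parts of the graph and one that performs well on dense parts, and then to optimise the mixing weight. Concretely, let $G$ be triangle-free with $m$ edges and set a degree threshold $\Delta_0$ (to be chosen as a function of $m$). Let $H$ be the subgraph induced by the vertices of degree at least $\Delta_0$ and let $L$ be the rest. On $L$ one should exploit the fact that bounded-degree triangle-free graphs have large stable sets: by Molloy's theorem~\cite{Mol17+} (or Shearer~\cite{She83}) such a graph has (fractional) chromatic number $O(\Delta_0/\log\Delta_0)$. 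On $H$ one counts: since every vertex of $H$ has degree at least $\Delta_0$ in $G$ but $G$ has only $m$ edges, $|V(H)|\le 2m/\Delta_0$, so the trivial bound gives $\chi_f(G[H]) \le |V(H)| \le 2m/\Delta_0$. Combining the two colourings on the vertex partition $V=V(H)\cup V(L)$ (using $\chi_f(G)\le \chi_f(G[H])+\chi_f(G[L])$) yields $\chi_f(G) = O(\Delta_0/\log\Delta_0) + O(m/\Delta_0)$, and balancing the two terms by taking $\Delta_0 = \Theta\!\big((m\log m)^{1/3}\big)$ produces a bound of order $m^{1/3}/(\log m)^{2/3}$, which is the right shape.

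The quantitative heart of the matter is then to track the constant through this balancing act, and here the argument of Theorem~\ref{thm:redux} (rather than a black-box split into two subgraphs) should be used, because the additive split above is wasteful by constant factors. The cleaner approach mirrors Lemma~\ref{lem:JeTo95}: take the distribution on stable sets of $G$ obtained by, with probability $\lambda$, sampling a uniformly random full neighbourhood (good where degrees are small relative to $m/|V|$), and with probability $1-\lambda$, running the greedy/iterative extraction built on Shearer's bound (good where the graph is genuinely sparse). One then lower-bounds $\Pr(v\in\mathbf S)$ uniformly over $v$, splitting the analysis according to whether $\deg(v)$ is above or below the threshold, and optimises $\lambda$ and the threshold simultaneously. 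The target constant $2^{4/3}$ is exactly what one gets from Shearer's $1/\sqrt2$ stability constant after the edge-counting normalisation $|V| \le$ (number of edges)-type relations are fed in; verifying that the two regimes meet at precisely this value, with lower-order terms absorbed into the $o(1)$, is the delicate computation.

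I expect the main obstacle to be the same one flagged in the text after Corollary~\ref{cor:She83}: the ``integration'' step loses a factor $(2+o(1))$ relative to the pointwise stability bound, and it is unclear whether fractional colouring lets one recover it in the edge-parametrised setting the way Theorem~\ref{thm:redux} does in the vertex-parametrised setting. In the vertex case the gain came from a genuinely probabilistic argument (averaging full neighbourhoods) that has no direct analogue once the relevant parameter is $m$ rather than $n$, because the neighbourhood distribution's success probability $\deg(v)/n$ does not translate cleanly into an edge count. Making that probabilistic step work — perhaps by weighting the choice of neighbourhood proportionally to degree, or by a two-step exposure that first reveals a sparse ``core'' — is where the real work lies, and is presumably why this is stated only as a conjecture.
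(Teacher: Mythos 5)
You have correctly recognised that this statement is Conjecture~\ref{conj:redux,edges} of the paper and is not proved there; the paper's actual content on this front is Proposition~\ref{prop:edges}, which gives the right order $m^{1/3}/(\log m)^{2/3}$ but with constant $3^{5/3}\approx 6.24$ rather than the conjectured $2^{4/3}\approx 2.52$ (obtained by feeding Theorem~\ref{thm:redux} and Theorem~\ref{thm:Molloy} into the Gimbel--Thomassen argument), together with the remark that the conjecture does hold for \emph{regular} triangle-free graphs by combining Theorems~\ref{thm:main,fractional} and~\ref{thm:Molloy}. So there is no proof to match yours against, and your decision not to claim one is the right call.

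That said, your warm-up computation contains a concrete error worth flagging. With the trivial bound $\chi_f(G[H])\le |V(H)|\le 2m/\Delta_0$ on the high-degree part, the two terms $O(\Delta_0/\log\Delta_0)$ and $O(m/\Delta_0)$ balance at $\Delta_0=\Theta(\sqrt{m\log m})$ and give only $O(\sqrt{m/\log m})$; at your stated threshold $\Delta_0=\Theta((m\log m)^{1/3})$ the second term is $m^{2/3}/(\log m)^{1/3}$, which swamps the target. To get the correct shape you must instead apply the vertex-parametrised bound $O(\sqrt{N/\log N})$ (Corollary~\ref{cor:She83} or Theorem~\ref{thm:redux}) to $G[H]$ with $N\le 2m/\Delta_0$; then $\Delta_0=(m\log m)^{1/3}$ does balance and yields $\Theta(m^{1/3}/(\log m)^{2/3})$ --- this is precisely the Poljak--Tuza/Gimbel--Thomassen scheme behind Proposition~\ref{prop:edges}. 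Even after that repair, the constant that falls out is $3^{5/3}$, and your sketch offers no mechanism for recovering the missing factor of roughly $2.5$ down to $2^{4/3}$; the paper itself says only that adapting the Theorem~\ref{thm:redux} approach to the edge setting ``could possibly'' improve Proposition~\ref{prop:edges} but that the authors had not managed it. Your closing diagnosis --- that the degree-weighted neighbourhood distribution does not translate cleanly when the parameter is $m$ --- is a fair statement of why the conjecture remains open.
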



Here it is essential to mention a recent achievement of Molloy~\cite{Mol17+}, a simpler derivation of and improvement upon Johansson's theorem~\cite{Joh96a}.

\begin{theorem}[Molloy~\cite{Mol17+}]\label{thm:Molloy}
As $\Delta\to\infty$, any triangle-free graph of maximum degree at most $\Delta$ has list chromatic number at most $(1+o(1))\Delta/\log \Delta$.
\end{theorem}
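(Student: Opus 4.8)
Fix $\eps>0$; it suffices to prove that every triangle-free graph $G$ of maximum degree at most $\Delta$ is $\ell$-choosable for $\ell=\lceil(1+\eps)\Delta/\log\Delta\rceil$ once $\Delta$ is large. The plan is to colour $G$ in a sequence of randomised rounds, in the style of Johansson's theorem~\cite{Joh96a}, while systematically exploiting triangle-freeness. Throughout, I would maintain for each not-yet-coloured vertex $v$ a residual list $L_i(v)$ (the colours of its original list not yet used on a neighbour) and set $d_i(v)$ to be the number of not-yet-coloured neighbours of $v$. In one round, each uncoloured vertex $v$ independently picks a colour $c(v)$ uniformly at random from $L_i(v)$; it \emph{retains} $c(v)$, and is permanently coloured by it, if no neighbour picked the same colour, and otherwise remains uncoloured with $L_{i+1}(v)=L_i(v)\setminus\{c(u):u\sim v,\ u\text{ retained }c(u)\}$.

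The heart of the matter, and where triangle-freeness enters, is to control how fast the residual lists shrink relative to the residual degrees. The number of colours deleted from $L_i(v)$ in a round equals the number of \emph{distinct} colours retained on $N(v)$; since $G$ is triangle-free, $N(v)$ is a stable set, so the tentative colours chosen there are independent uniform samples, and the number of distinct retained colours can be estimated sharply via a clean occupancy computation. It is genuinely smaller than the number of neighbours that get coloured, because of collisions, and it is this discrepancy that buys the extra factor $\log\Delta$: in a graph with triangles the retained colours on $N(v)$ are forced to be distinct along edges, which ruins the estimate. Carrying out the bookkeeping, one fixes target sequences $(\ell_i,d_i)$ with $\ell_0=\ell$, $d_0=\Delta$, obeying recursions of the rough form ``$d_{i+1}$ is $d_i$ times a power of the retention probability'' and ``$\ell_{i+1}=\ell_i$ minus the expected number of distinct retained colours on $N(v)$'', and one checks that the choice $\ell_0=(1+\eps)\Delta/\log\Delta$ is precisely what makes the residual lists eventually dominate the residual degrees. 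Once a round $i^\star$ is reached with $|L_{i^\star}(v)|>d_{i^\star}(v)$ for every uncoloured $v$, a greedy completion finishes the colouring.

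To turn each round's expected behaviour into an almost-sure statement, I would note that, for each uncoloured $v$, both $|L_{i+1}(v)|$ and $d_{i+1}(v)$ are determined by the colour choices on the radius-$2$ ball about $v$ and are Lipschitz in those choices with constant $O(1)$; a Talagrand/bounded-difference concentration inequality then gives that each lies within, say, $\Delta^{2/3}$ of its target with probability $1-\exp(-\Delta^{\Omega(1)})$. The bad event for $v$ (some target missed) depends only on choices within distance $2$ of $v$, hence is mutually independent of all bad events outside the distance-$4$ ball of $v$, of which there are at most $\Delta^4\ll\exp(\Delta^{\Omega(1)})$, so the Lov\'asz Local Lemma supplies, for each round, an outcome in which \emph{every} uncoloured vertex hits its target; the $\Delta^{2/3}$ slacks are negligible against the per-round changes and can be absorbed. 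Iterating $O(\mathrm{polylog}\,\Delta)$ times then reaches the regime where the greedy finish applies.

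The main obstacle is the bookkeeping of the second paragraph: the naive invariant ``list size minus residual degree'' is essentially unchanged from round to round, so one needs a more delicate potential—this is precisely Molloy's contribution over Johansson, and in~\cite{Mol17+} it is handled by an entropy-compression argument that simultaneously does the job of the local lemma and tracks exactly the right quantity—and one must verify that the error terms accumulated over all rounds remain $o(\ell/\Delta)$, so that the final constant is $1+o(1)$ rather than merely $O(1)$. For this step I would follow Molloy's entropy-compression analysis rather than the expectation-plus-concentration scheme sketched above, which is closer to Johansson's original and yields a worse constant.
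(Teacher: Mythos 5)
This statement is not proved in the paper at all: Theorem~\ref{thm:Molloy} is imported as a black box from Molloy~\cite{Mol17+}, so there is no in-paper argument to compare yours against. Judged on its own terms, your write-up is a reasonable road map of the known proofs (Johansson-style iterated random colouring, concentration plus the Local Lemma, with triangle-freeness entering through the neighbourhoods being stable sets), but it is not a proof. The decisive step --- the invariant or potential whose evolution shows that $\ell_0=(1+\eps)\Delta/\log\Delta$ colours suffice, rather than the $O(\Delta/\log\Delta)$ with a large constant that the naive iteration yields --- is exactly the step you name as ``the main obstacle'' and then resolve by writing ``I would follow Molloy's entropy-compression analysis.'' That is a citation, not an argument; everything before it is the part of the proof that was already understood since Johansson~\cite{Joh96a}, and everything after it is the theorem itself.

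Two smaller points. First, your explanation of where triangle-freeness is used is slightly off: the tentative colours chosen on $N(v)$ are independent simply because every vertex samples independently, whether or not $N(v)$ is stable. What stability of $N(v)$ actually buys is that two neighbours of $v$ which pick the same colour can \emph{both retain} it (they are non-adjacent), so the number of distinct colours removed from $L(v)$ is genuinely smaller than the number of neighbours coloured; with triangles present the retained colours on $N(v)$ would be forced apart along edges of $N(v)$ and the saving disappears. You say something close to this later, but the sentence ``so the tentative colours chosen there are independent uniform samples'' attributes the independence to the wrong cause. Second, Molloy's published argument is not an iterated nibble at all: it establishes in essentially one shot (via entropy compression, or in later treatments via the Local Lemma applied to a random partial colouring, cf.\ the hard-core-model proof mentioned in the paper's concluding notes) the existence of a partial proper colouring in which every uncoloured vertex has many more available colours than uncoloured neighbours, after which one finishes greedily. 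So even as an outline, your sketch describes a different (older, and quantitatively weaker) proof architecture than the one in~\cite{Mol17+} that you ultimately lean on.
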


\noindent
Since this may be considered a stronger form of Shearer's bound, one might wonder if it alone is enough to verify Conjecture~\ref{conj:redux}. This does not seem to be the case. 
We remark however that Theorems~\ref{thm:main,fractional} and~\ref{thm:Molloy} together immediately yield Conjectures~\ref{conj:redux} and~\ref{conj:redux,edges} for {\em regular} triangle-free graphs.
(Let $G$ be a $D$-regular triangle-free graph. If $D \ge \sqrt{2^{-1}n\log n}$, then it follows from Theorem~\ref{thm:main,fractional}; otherwise, it follows from Theorem~\ref{thm:Molloy}.)
Moreover, as we will shortly see, an iterated application of Theorem~\ref{thm:Molloy} combined with the simple idea in the proof of Theorem~\ref{thm:main,fractional} yields Theorem~\ref{thm:redux}.

Recall that we may equivalently define the fractional chromatic number of a graph as the smallest $k$ such that there is an assignment of measurable subsets of the interval $[0,k]$ (or rather of any subset of $\mathbb R$ of measure $k$) to the vertices such that each vertex is assigned a subset of measure $1$ and subsets assigned to adjacent vertices are disjoint.

\begin{proof}[Proof of Theorem~\ref{thm:redux}]
Fix $\eps>0$.
Without loss of generality, assume $\eps<1/2$. Let $G=(V,E)$ be a triangle-free graph on $n$ vertices and let $D \le n$ be some positive integer to be specified later in the proof. We first associate $n$ disjoint intervals of measure $1/D$ to each of the full neighbourhood sets (each of which is a stable set), and assign each such interval to its neighbourhood's vertices. By independently, arbitrarily de-assigning some (parts) of these intervals, we may assume each vertex of degree at least $D$ has an assignment of measure exactly $1$. 
On the other hand, the subgraph induced by vertices of measure less than $1$ has maximum degree less than $D$.
More precisely, let $V_i$ be the set of vertices of degree exactly $i$ in $G$, for $i < D$:
this initial partial fractional colouring gives each vertex of $V_i$ an assignment of measure exactly $i/D<1$.
We have essentially shown how it suffices to restrict our attention in the remainder of the proof to $G$ having maximum degree $D$, by an incorporation of the same idea used in the proof of Theorem~\ref{thm:main,fractional}.

For each $D^{1/(1+\eps/5)} \le i < D$, let us write $G_i$ for the subgraph of $G$ induced by $\cup_{j=0}^i V_j$. Since $G_i$ is a triangle-free graph of maximum degree at most $i$, it follows from Theorem~\ref{thm:Molloy} that $G_i$ admits a proper colouring $c_i$ of its vertices with at most $(1+\eps/5)i/\log i$ colours, provided $i$ is large enough. (Since $i \ge D^{1/(1+\eps/5)}$, $i$ is arbitrarily large if $D$ is.) For each colour class $\mathcal{C}$ of $c_i$, we choose an interval of measure $1/D$ (that is disjoint from all previously used intervals), and assign it to each vertex of $\mathcal{C}$.

This extends the initial partial fractional colouring to nearly all of $G$. If $D^{1/(1+\eps/5)} \le i < D$, then each vertex of $V_i$ has been assigned $D-i$ additional intervals of measure $1/D$, resulting in an assignment of measure $1$.
Note that, for $D$ large enough, the total measure of the subsets we have thus used is
\begin{align*}
 \frac{n}{D} &+ \frac{1}{D}\sum_{i=\lceil D^{1/(1+\eps/5)}\rceil}^D\frac{(1+\eps/5)i}{\log i} \\
 & \le  \frac{n}{D} + \frac{(1+\eps/5)^2}{D\log D}\sum_{i=0}^D i
  = \frac{n}{D} + \frac{(1+\eps/5)^2(D+1)}{2\log D}\\
 & \le \frac{n}{D} + \frac{(1+\eps/2)D}{2\log D}.
\end{align*}

We have extended the initial partial fractional colouring so that every vertex of $G$ has measure $1$ apart from those vertices of degree less than $D^{1/(1+\eps/5)}$. Since the above bound on the total measure used is strictly more than $D^{1/(1+\eps/5)}$ if $D$ is large enough, we can greedily extend the partial fractional colouring to all remaining vertices without any additional measure.

It remains to specify $D$ so that we use at most $(\sqrt{2}+\eps)\sqrt{n/\log n}$ measure in total. Provided $n$ is large enough, the choice $D= \lfloor\sqrt{n\log n}\rfloor$ suffices. (Note that under this choice $D$ is arbitrarily large if $n$ is.)
\end{proof}




To conclude the section, we comment that a straightforward substitution of Theorem~\ref{thm:redux} or Corollary~\ref{cor:She83} together with Theorem~\ref{thm:Molloy} into the proof by Gimbel and Thomassen~\cite{GiTh00} (the proofs in~\cite{Nil00,PoTu94} being slightly less efficient) yields the following bounds. The constants are roughly $2.5$ and $3$ times larger than the constant in Conjecture~\ref{conj:redux,edges}.

\begin{proposition}\label{prop:edges}
As $m\to\infty$, any triangle-free graph with $m$ edges has fractional chromatic number at most $(3^{5/3}+o(1))m^{1/3}/(\log m)^{2/3}$ and chromatic number at most $(3^{5/3}2^{1/3}+o(1))m^{1/3}/(\log m)^{2/3}$.
\end{proposition}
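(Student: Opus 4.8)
The plan is to run the Gimbel--Thomassen reduction~\cite{GiTh00}: split the vertex set of a triangle-free graph $G$ with $m$ edges at a degree threshold $\Delta=\Delta(m)$, colour the few high-degree vertices with the bound in terms of the number of vertices, and colour the low-degree vertices (which induce a bounded-degree subgraph) with Molloy's theorem. Concretely, fix $\Delta$ of order $m^{1/3}(\log m)^{1/3}$, let $H$ be the set of vertices of degree at least $\Delta$ and $L=V(G)\setminus H$. Since $\sum_v\deg(v)=2m$ we have $|H|\le 2m/\Delta$, and $G[H]$ is triangle-free, so Theorem~\ref{thm:redux} gives $\chi_f(G[H])\le(2+o(1))\sqrt{|H|/\log|H|}\le(2+o(1))\sqrt{(2m/\Delta)/\log(2m/\Delta)}$ by monotonicity of $x\mapsto\sqrt{x/\log x}$ (with the understanding that this term is merely $O(1)$, hence negligible, if $|H|$ stays bounded); Corollary~\ref{cor:She83} gives the same with $2\sqrt{2}$ in place of $2$ for the ordinary chromatic number. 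On the other side $G[L]$ is triangle-free of maximum degree less than $\Delta$, so Theorem~\ref{thm:Molloy} gives $\chi(G[L])\le\ch(G[L])\le(1+o(1))\Delta/\log\Delta$. Since $V(G)=H\cup L$ disjointly, assigning disjoint palettes (resp.\ disjoint measure-$k$ intervals) to $H$ and to $L$ makes every $H$--$L$ edge automatically properly coloured, so $\chi(G)\le\chi(G[H])+\chi(G[L])$ and $\chi_f(G)\le\chi_f(G[H])+\chi_f(G[L])$.

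It then remains to optimise the threshold. Writing $\Delta=C\,m^{1/3}(\log m)^{1/3}$ for a constant $C>0$, one has $2m/\Delta=(2/C)\,m^{2/3}(\log m)^{-1/3}$, and since $\Delta$ is of polynomial order in $m$, $\log(2m/\Delta)=(\tfrac23+o(1))\log m$ and $\log\Delta=(\tfrac13+o(1))\log m$. Substituting, the high-degree term becomes $(1+o(1))\,2\sqrt{3/C}\cdot m^{1/3}/(\log m)^{2/3}$ in the fractional case (and $(1+o(1))\,2\sqrt{6/C}\cdot m^{1/3}/(\log m)^{2/3}$ for $\chi$), while the low-degree term becomes $(1+o(1))\,3C\cdot m^{1/3}/(\log m)^{2/3}$. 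Hence
\[
\chi_f(G)\le(1+o(1))\bigl(2\sqrt{3/C}+3C\bigr)\frac{m^{1/3}}{(\log m)^{2/3}},\qquad
\chi(G)\le(1+o(1))\bigl(2\sqrt{6/C}+3C\bigr)\frac{m^{1/3}}{(\log m)^{2/3}}.
\]
Minimising $C\mapsto 2\sqrt{3/C}+3C$ over $C>0$ yields the optimum at $C=3^{-1/3}$ with value $3^{5/3}$, proving the fractional bound; minimising $C\mapsto 2\sqrt{6/C}+3C$ yields the optimum at $C=(2/3)^{1/3}$ with value $2^{1/3}3^{5/3}$, proving the chromatic bound.

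I do not anticipate a serious obstacle: the argument is essentially bookkeeping around three external asymptotic inputs. The one point that needs mild care is the handling of the $o(1)$ terms, since Theorem~\ref{thm:redux}, Corollary~\ref{cor:She83} and Theorem~\ref{thm:Molloy} are statements about $|H|\to\infty$ and $\Delta\to\infty$; one must either note that these quantities do tend to infinity for the chosen $\Delta=\Theta(m^{1/3}(\log m)^{1/3})$ (indeed $2m/\Delta\to\infty$), or absorb a bounded $G[H]$ (resp.\ a bounded-degree $G[L]$) into the error term, and one must verify $\log(2m/\Delta)\sim\tfrac23\log m$ and $\log\Delta\sim\tfrac13\log m$, which is immediate from $\Delta$ being polynomially bounded. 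Everything else is the one-variable optimisation recorded above.
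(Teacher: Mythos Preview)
Your proposal is correct and matches the approach the paper itself indicates: the paper does not give a detailed proof of this proposition, merely stating that it follows by a ``straightforward substitution of Theorem~\ref{thm:redux} or Corollary~\ref{cor:She83} together with Theorem~\ref{thm:Molloy} into the proof by Gimbel and Thomassen~\cite{GiTh00}''. Your degree-threshold split at $\Delta=\Theta(m^{1/3}(\log m)^{1/3})$, the bound $|H|\le 2m/\Delta$, and the one-variable optimisation (yielding $C=3^{-1/3}$ for the fractional case and $C=(2/3)^{1/3}$ for the chromatic case) are exactly the intended details, and your handling of the boundary cases and $o(1)$ terms is adequate.
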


\noindent
The approach for Theorem~\ref{thm:redux} could possibly be adapted to more directly improve upon Proposition~\ref{prop:edges}, but we have not yet managed to do so.

\section{Bounds involving cubes}\label{sec:abovesquareroot}

In this section, we prove Theorem~\ref{th:abovesquareroot}. We also make some additional observations that link our results with the fractional distance-$3$ chromatic number.

Let us first remark that, given the adjacency matrix $A$ of a graph $G$, the total number of directed three-edge walks in $G$ is the sum of all entries in the matrix $A^3$.
The proof of Theorem~\ref{th:abovesquareroot} combines ideas from the proofs of Theorems~\ref{thm:main,fractional} and of~\ref{thm:fractional}, without needing to bound the fractional chromatic number.

\begin{proof}[Proof of Theorem~\ref{th:abovesquareroot}]
Let $G=(V,E)$ be a triangle-free graph with $|V|=n$ and $|E|=m$ and suppose $G$ has $w_3$ directed three-edge walks. We write $q = w_3/(2nm)$ and
note that
\[
q = \frac{\sum_{x\in V} \sum_{v \in N(x)} \sum_{w\in N(v)} \deg(w)}{n \sum_{x \in V} \deg(x)}.
\]
Let $s_1,s_2$ be two vertices chosen uniformly at random and let $\mathbf{S}_1:=N(s_1)$ and $\mathbf{S}_2:=N(s_2)$ denote their neighbourhoods, which are stable sets by triangle-freeness.
Note that
\(
\Exp(|\mathbf{S}_1|)=\Exp(|\mathbf{S}_2|)=\frac{1}{n} \sum_{x\in V} \deg(x),
\)
so also
\(
\frac{1}{2}\Exp(|\mathbf{S}_1|+|\mathbf{S}_2)|)=\frac{1}{n} \sum_{x\in V} \deg(x).
\)

The number of edges in the subgraph induced by $\mathbf{S}_1\cup\mathbf{S}_2$ satisfies
\begin{align*}
&\Exp\left(|E(G[\mathbf{S}_1\cup\mathbf{S}_2])|  \right) \\
&= \sum_{S_1,S_2\subseteq [n]} \Pr\left(  (\mathbf{S}_1=S_1) \cap (\mathbf{S}_2=S_2) \right) \cdot |E(G[S_1\cup S_2])|\\
&= \sum_{S_2} \Pr\left(\mathbf{S}_2=S_2 \right) \sum_{S_1} \Pr\left(\mathbf{S}_1=S_1  \right) \cdot |E(G[S_1\cup S_2])|\\
&= \sum_{S_2} \Pr\left(\mathbf{S}_2=S_2 \right) \cdot \Exp\left( |E(G[\mathbf{S}_1 \cup S_2])|\right)
= \frac{1}{n} \sum_{x \in V} \Exp \left( |E(G[\mathbf{S}_1 \cup N(x)])| \right) \\
&= \frac{1}{n} \sum_{x \in V} \sum_{v\in N(x)} \Exp \left( |\mathbf{S}_1 \cap N(v)| \right) 
= \frac{1}{n} \sum_{x \in V} \sum_{v \in N(x)} \sum_{w \in N(v)} \frac{\deg(w)}{n} \\
&= \frac{\sum_{x\in V} \sum_{v \in N(x)} \sum_{w\in N(v)} \deg(w)}{n \sum_{x \in V} \deg(x)} \cdot \frac{\Exp(|\mathbf{S}_1|+|\mathbf{S}_2|)}{2} 
= \frac{q}{2} \Exp(|\mathbf{S}_1|+|\mathbf{S}_2|).
\end{align*}
By linearity of expectation, 
\[
\Exp\left(|E(G[\mathbf{S}_1\cup\mathbf{S}_2])|  - \frac{q}{2} (|\mathbf{S}_1|+ |\mathbf{S}_2|) \right) \geq 0.
\]

It follows that there are two stable sets $S_1$ and $S_2$ of $G$ with at least $\tfrac{q}{2} \left( |S_1|+|S_2|\right)$ edges in the subgraph induced by $S_1 \cup S_2$. Discarding the vertices of $S_1 \cap S_2$ (if any exist) yields a bipartite induced subgraph of average degree at least $q$. Therefore $G$ contains a bipartite induced subgraph of minimum degree at least $q/2$, as desired.
\end{proof}

Next we indicate a mild improvement upon our bounds in terms of fractional distance-$3$ colouring. Given a graph $G$, the cube $G^3$ of $G$ is the simple graph formed from $G$ by including all edges between vertices that are connected by a path in $G$ of length at most $3$.
The fractional distance-$3$ chromatic number of $G$ is the fractional chromatic number $\chi_f(G^3)$ of $G^3$.
Observe that, if $G$ is triangle-free and $S$ is a stable set of $G^3$, then the union $\cup_{v\in S} N_G(v)$ of neighbourhood sets taken over $S$ is a stable set in $G$.
In the proofs of Theorems~\ref{thm:main,fractional} and~\ref{th:abovesquareroot}, if we sample stable sets by taking such neighbourhood unions according to the distribution given by $\chi_f(G^3)$ rather than uniformly taking a neighbourhood set, then we obtain the following.

\begin{theorem}\label{thm:distance3}
We have that
\begin{itemize}
\item the upper bound in Theorem~\ref{thm:main,fractional} holds with $\chi^3_f/d$ instead of $n/d$; and
\item Theorem~\ref{th:abovesquareroot} holds with $w_3/(4 \chi^3_f m)$ or $d^2/(2\chi^3_f)$ instead of $w_3/(4 n m)$;
\end{itemize}
where in each case $G$ denotes the corresponding triangle-free graph and $\chi^3_f=\chi_f(G^3)$ denotes its fractional distance-$3$ chromatic number.
\end{theorem}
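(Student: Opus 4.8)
The plan is to re-run the proofs of Theorems~\ref{thm:main,fractional} and~\ref{th:abovesquareroot} essentially verbatim, the only change being that the random stable set is sampled not as the neighbourhood $N_G(s)$ of one uniformly random vertex $s$, but as a union $\bigcup_{v\in\mathbf{T}}N_G(v)$ of neighbourhoods indexed by a random stable set $\mathbf{T}$ of $G^3$. First I would record the two structural facts about the cube that make this substitution work. If $T$ is a stable set of $G^3$ then $\bigcup_{v\in T}N_G(v)$ is a stable set of $G$: an edge $ab$ of $G$ with $a\in N_G(v_1)$ and $b\in N_G(v_2)$ for some $v_1,v_2\in T$ either forces a triangle on $\{v_1,a,b\}$ when $v_1=v_2$, or else exhibits a walk $v_1,a,b,v_2$ of length at most $3$ (also in the degenerate cases $a=v_2$ or $b=v_1$) and hence an edge $v_1v_2$ of $G^3$ with $v_1\ne v_2$ — in either case contradicting $T$ being stable in $G^3$. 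Conversely, for each $u\in V$ the set $N_G(u)$ is a clique of $G^3$ (indeed of $G^2$), any two of its members being joined by the length-$2$ path through $u$; so any stable set of $G^3$ meets $N_G(u)$ in at most one vertex.

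Next I would fix a probability distribution $\mathcal{T}$ over the stable sets of $G^3$ with $\Pr(v\in\mathbf{T})=1/\chi^3_f$ for \emph{every} $v\in V$, where $\mathbf{T}\sim\mathcal{T}$. Such a distribution exists: start from one witnessing $\chi_f(G^3)=\chi^3_f$, so $\Pr(v\in\mathbf{T})\ge 1/\chi^3_f$ for all $v$, and then independently delete each sampled vertex $v$ with probability $1-(\chi^3_f\Pr(v\in\mathbf{T}))^{-1}$; deleting vertices preserves stability in $G^3$, and the marginals become exactly $1/\chi^3_f$. Now set $\mathbf{S}:=\bigcup_{v\in\mathbf{T}}N_G(v)$, a random stable set of $G$ by the first fact above. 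By the clique fact and mutual exclusivity, for every $u\in V$ we get $\Pr(u\in\mathbf{S})=\Pr(\mathbf{T}\cap N_G(u)\ne\emptyset)=\sum_{v\in N_G(u)}\Pr(v\in\mathbf{T})=\deg(u)/\chi^3_f\ge d/\chi^3_f$. This is precisely property ${\rm Q}^*_{d/\chi^3_f}$, so $\chi_f(G)\le\chi^3_f/d$; together with the Ajtai--Koml\'os--Szemer\'edi term (which is untouched) this yields the first bullet.

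For the second bullet I would take $\mathbf{S}_1,\mathbf{S}_2$ to be independent copies of $\mathbf{S}$ and replay the expectation computation from the proof of Theorem~\ref{th:abovesquareroot}. Writing $\pi_u:=\Pr(u\in\mathbf{S})=\deg(u)/\chi^3_f$, one has $\tfrac12\Exp(|\mathbf{S}_1|+|\mathbf{S}_2|)=\sum_u\pi_u=2m/\chi^3_f$. Since $\mathbf{S}_1$ and $\mathbf{S}_2$ are stable, every edge of $G[\mathbf{S}_1\cup\mathbf{S}_2]$ must cross between the two sets, whence (using independence, then identical distribution, and $\sum_{ab\in E}2\deg(a)\deg(b)=w_3$) $\Exp(|E(G[\mathbf{S}_1\cup\mathbf{S}_2])|)=2\sum_{ab\in E}\pi_a\pi_b=(\chi^3_f)^{-2}\,w_3$. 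Hence $\Exp\bigl(|E(G[\mathbf{S}_1\cup\mathbf{S}_2])|-\tfrac{w_3}{4m\chi^3_f}(|\mathbf{S}_1|+|\mathbf{S}_2|)\bigr)=0$, so some realisation $S_1,S_2$ satisfies the corresponding inequality; discarding $S_1\cap S_2$ (whose vertices are isolated in $G[S_1\cup S_2]$, as any incident edge would lie inside $S_1$) leaves a bipartite induced subgraph of average degree at least $w_3/(2m\chi^3_f)$, hence one of minimum degree at least $w_3/(4m\chi^3_f)$. The variant $d^2/(2\chi^3_f)$ then follows from $w_3\ge 2md^2$, exactly as Theorem~\ref{th:abovesquareroot} was observed to imply the $d^2/(2n)$ bound.

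I expect the only genuine subtlety to be the cube bookkeeping: confirming that $\bigcup_{v\in T}N_G(v)$ is truly independent in $G$ (including the degenerate cases of the walk argument), that a $G^3$-stable set hits $N_G(u)$ at most once so that $\Pr(u\in\mathbf{S})$ collapses to a clean sum of degree contributions, and that the thinning of $\mathcal{T}$ to uniform marginals is legitimate (it is, since it only removes vertices). The probabilistic estimates themselves are line-for-line identical to those already performed, and the constants come out as claimed.
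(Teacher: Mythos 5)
Your proposal is correct and is precisely the argument the paper intends (the paper only sketches it in one sentence: replace the uniformly random neighbourhood by a union of neighbourhoods over a random stable set of $G^3$); your two structural facts about $G^3$, the thinning to exact marginals $1/\chi^3_f$, and the recomputed expectations all check out. The thinning step is a worthwhile detail to make explicit, since the second bullet's expectation identity genuinely needs the marginals to be exactly, not merely at least, $\deg(u)/\chi^3_f$.
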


Curiously, as the triangle-free process is sharp in Theorem~\ref{thm:main,fractional}, we obtain the following. (Perhaps this same result with distance-$2$ also holds.)

\begin{corollary}
With high probability, the final output of the triangle-free process has $\Omega(n)$ fractional distance-$3$ chromatic number as $n\to\infty$.
\end{corollary}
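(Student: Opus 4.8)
The plan is to combine the distance-$3$ strengthening of the upper bound in Theorem~\ref{thm:main,fractional} (the first bullet of Theorem~\ref{thm:distance3}) with the known asymptotics of the triangle-free process. Write $G$ for the final output of the triangle-free process on $n$ vertices, let $d=d(G)$ denote its minimum degree and $\alpha=\alpha(G)$ its stability number. By the first bullet of Theorem~\ref{thm:distance3}, since $G$ is triangle-free with minimum degree at least $d$, we have $\chi_f(G)\le \chi^3_f/d$, equivalently $\chi^3_f\ge d\cdot\chi_f(G)$. Since every graph satisfies $\chi_f(G)\ge n/\alpha$, this yields the deterministic bound $\chi^3_f\ge dn/\alpha$.

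Next I would invoke the results of Bohman and Keevash~\cite{BoKe13+} and of Fiz Pontiveros, Griffiths and Morris~\cite{FGM20}, exactly as already quoted in the proof of Theorem~\ref{thm:main,fractional}: with high probability, $d=(\sqrt{2^{-1}}+o(1))\sqrt{n\log n}$ and $\alpha\le(\sqrt2+o(1))\sqrt{n\log n}$ as $n\to\infty$. Conditioning on this high-probability event and substituting into the bound above gives, with high probability,
\[
\chi^3_f\ \ge\ \frac{dn}{\alpha}\ \ge\ \frac{(\sqrt{2^{-1}}+o(1))\sqrt{n\log n}\cdot n}{(\sqrt2+o(1))\sqrt{n\log n}}\ =\ \left(\tfrac12+o(1)\right)n .
\]
In particular $\chi^3_f=\Omega(n)$; and since trivially $\chi^3_f\le n$, in fact $\chi^3_f=\Theta(n)$ with high probability.

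There is essentially no obstacle here beyond correctly marshalling the inputs: the only point requiring a little care is that Theorem~\ref{thm:distance3} is stated for a fixed triangle-free graph, so one must first condition on the high-probability event that $G$ attains the stated minimum degree and stability number, and only then apply the deterministic inequality $\chi^3_f\ge dn/\alpha$ on that event. For the parenthetical remark about distance $2$, the identical argument would go through verbatim provided an analogue of Theorem~\ref{thm:distance3} holds with $\chi_f(G^2)$ in place of $\chi_f(G^3)$ (which plausibly it does, since a stable set of $G^2$ also yields a stable set of $G$ upon taking the union of neighbourhoods), but establishing such an analogue is not needed for the corollary as stated.
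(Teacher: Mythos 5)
Your proposal is correct and is exactly the argument the paper intends (the paper gives no explicit proof, only the remark that the corollary follows from the sharpness of the triangle-free process in Theorem~\ref{thm:main,fractional} combined with Theorem~\ref{thm:distance3}): you chain $\chi^3_f\ge d\,\chi_f(G)\ge dn/\alpha$ with the whp bounds $d=(\sqrt{2^{-1}}+o(1))\sqrt{n\log n}$ and $\alpha\le(\sqrt{2}+o(1))\sqrt{n\log n}$ to get $\chi^3_f\ge(\tfrac12+o(1))n$. Your handling of the conditioning on the high-probability event is also the right way to make the deterministic inequality rigorous in the random setting.
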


\section{List colouring}
\label{sec:list}

In Section~\ref{sec:redux}, we pursued sharper but fractional versions of the original problems of Erd\H{o}s and Hajnal~\cite{ErHa85}. In another direction, the natural list colouring versions are open to the best of our knowledge.

\begin{conjecture}\label{conj:list}
There are constants $C_1,C_2>0$ such that any triangle-free graph on $n$ vertices with $m$ edges has list chromatic number at most $C\sqrt{n/\log n}$ and at most $C_2m^{1/3}/(\log m)^{2/3}$.
\end{conjecture}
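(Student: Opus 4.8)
The plan is to combine Molloy's theorem (Theorem~\ref{thm:Molloy}) with the approach behind Theorem~\ref{thm:redux} and Theorem~\ref{thm:main,list}, using a degree threshold $D$: take $D\approx\sqrt{n\log n}$ for the first bound and $D\approx(m\log m)^{1/3}$ for the second. The vertices of degree less than $D$ induce a triangle-free subgraph of maximum degree below $D$, which by Molloy's theorem is list-colourable from lists of size $(1+o(1))D/\log D$; for $D\approx\sqrt{n\log n}$ this is $(2+o(1))\sqrt{n/\log n}$, and for $D\approx(m\log m)^{1/3}$ it is $O(m^{1/3}/(\log m)^{2/3})$. Since Molloy's result is already a list-colouring statement, this part of the bound comes for free and with the target order, and no shell-by-shell iteration is needed (unlike in the fractional argument of Theorem~\ref{thm:redux}). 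Everything then reduces to handling the vertices of degree at least $D$ without spending more colours, and there may be as many as $n$ of them (respectively $2m/D$).

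Here is exactly where the fractional argument is comfortable and its list counterpart is not. In the fractional world one precolours the high-degree vertices by taking the partial proper fractional colouring whose stable classes are the $n$ neighbourhood sets $N(v)$, each used with weight $1/D$: since triangle-freeness makes each $N(v)$ stable and every vertex of degree $\ge D$ lies in at least $D$ of them, this brings every high-degree vertex to full weight while spending total weight only $n/D$, and one finishes the low-degree subgraph with fresh weight via Molloy, for a total of $n/D+O(D/\log D)$. The obstacle is that list colouring is a local, adversarial notion: one cannot reserve a block of colours for a "high-degree phase" and a disjoint block for a "low-degree phase", since the adversary hands out arbitrary lists of the common size $L=O(\sqrt{n/\log n})$; moreover $L$ is far smaller than $D$, so a high-degree vertex cannot simply be left for last — its neighbours (at least $D$ of them) may together exhaust its entire list. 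In short the problem very likely does not decompose, and $\chi_\ell$ is not controlled by $\chi_f$ in general (as $K_{t,t}$ shows, with $\chi_f=2$ but $\chi_\ell=(1+o(1))\log_2 t$), so a genuinely new idea is needed.

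The most plausible route is a single global semi-random (nibble) colouring of $G$ from lists of size $\Theta(\sqrt{n/\log n})$, run in the style of Molloy's argument, in which one shows that after each round every surviving list shrinks at a rate matched to the vertex's degree into the still-uncoloured part, and that the high-degree vertices — whose large stable neighbourhoods should provide the slack — do not fall behind, until the residual graph has maximum degree small compared to the remaining list sizes and can be completed greedily. The crucial and hardest point is to keep the colour count proportional to $n/D+D/\log D$ rather than to $(n\log n)/D+D/\log D$: the bound in Theorem~\ref{thm:main,list} loses a logarithmic factor precisely on the high-degree side, and that lost factor is the $\sqrt{\log n}$ we are trying to save. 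The edge bound would follow from the same scheme with all the bookkeeping expressed through $m$ and the threshold $D\approx(m\log m)^{1/3}$, using $|\{v:\deg(v)\ge D\}|\le 2m/D$ together with $m/D^2\approx m^{1/3}/(\log m)^{2/3}$.
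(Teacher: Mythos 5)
This statement is a \emph{conjecture} in the paper (Conjecture~\ref{conj:list}); the authors explicitly describe the list-colouring versions of the Erd\H{o}s--Hajnal problems as open, and their best unconditional results in this direction are Theorem~\ref{thm:list} (list chromatic number at most $(2\sqrt{2}+o(1))\sqrt{n}$, which is a factor $\sqrt{\log n}$ away from the conjectured bound) and Corollary~\ref{cor:list}. Your proposal does not close that gap: it is an accurate diagnosis of \emph{why} the conjecture is hard, followed by a speculative plan, but it contains no proof. The low-degree half of your argument is fine and is exactly what the paper does --- Molloy's theorem handles $G_{\le D}$ from lists of size $(1+o(1))D/\log D$. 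The entire difficulty sits in the high-degree half, and there your proposal stops at the point where the work would begin: the ``single global semi-random colouring'' in which high-degree vertices ``do not fall behind'' is not formulated as a lemma, has no stated invariant for how list sizes and residual degrees co-evolve, and no argument for why the large stable neighbourhoods of high-degree vertices provide usable slack in an adversarial list setting. You yourself concede that ``a genuinely new idea is needed,'' which is a candid admission that this is not a proof.

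To be clear about where the quantitative loss occurs (you identify it correctly): the only known way to convert the fractional bound $\chi_f(G_{>D})\le n/D$ into a list bound is Proposition~\ref{prop:logn}, which costs a factor $\log n$, giving $(n\log n)/D$ for the high-degree part; balancing $(n\log n)/D$ against $D/\log D$ forces $D\approx\sqrt{n}\log n$ and yields $\Theta(\sqrt{n})$ total, not $\Theta(\sqrt{n/\log n})$. Avoiding that logarithmic factor is precisely the open problem, and your $K_{t,t}$ remark shows it cannot be avoided by any argument that only uses the fractional chromatic number of the high-degree part as a black box. The same issue, in $m$-scaling, blocks the second bound. So the proposal should be regarded as a correct statement of the obstruction together with an unproven programme, not as a proof of the conjecture.
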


\noindent
Note that by a result of Alon~\cite{Alo92}, the two terms in Conjecture~\ref{conj:list} are correct up to $\log n$ and $\log m$ factors, respectively.

Moreover, using a similar approach as for Theorem~\ref{thm:redux}, here we give progress towards one of the statements in Conjecture~\ref{conj:list}, as embodied by Theorem~\ref{thm:main,list}. Theorem~\ref{thm:main,list} is a combination of the bounds in Corollary~\ref{cor:list} and Theorem~\ref{thm:list} stated below.

We need a modest refinement of the aforementioned result of Alon.

\begin{proposition}\label{prop:logn}
Any graph on $n\ge2$ vertices with fractional chromatic number $\chi_f$ has list chromatic number at most $\lceil \chi_f \log n\rceil$.
\end{proposition}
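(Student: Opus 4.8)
The plan is to prove the classical fact that the list chromatic number is bounded by $O(\chi_f \log n)$ via a direct probabilistic argument, tracking constants carefully enough to get the clean bound $\lceil \chi_f \log n \rceil$. First I would fix an optimal fractional colouring: since $\chi_f = \chi_f(G)$, there is a probability distribution $\mathcal{S}$ over stable sets of $G$ with $\Pr(v \in \mathbf{S}) \ge 1/\chi_f$ for every vertex $v$. Equivalently (and this is the more convenient formulation for the argument), by LP duality / a standard averaging argument there is a finite multiset $S_1, \dots, S_N$ of stable sets and nonnegative rational weights summing to $\chi_f$ such that every vertex is covered with total weight at least $1$; clearing denominators, there is a multiset of stable sets such that each vertex lies in at least a $1/\chi_f$ fraction of them.

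Now suppose each vertex $v$ is given a list $L(v)$ of $k := \lceil \chi_f \log n \rceil$ colours. The key step is: for each colour $c$ appearing in some list, independently pick a stable set $\mathbf{S}_c$ according to the distribution $\mathcal{S}$ (or uniformly from the covering multiset), and attempt to colour with $c$ exactly those vertices $v$ with $c \in L(v)$ and $v \in \mathbf{S}_c$. Since each $\mathbf{S}_c$ is stable, no two adjacent vertices ever receive the same colour, so any assignment produced this way is a proper partial colouring respecting lists. A vertex $v$ fails to be coloured only if, for every one of its $k$ list colours $c$, we had $v \notin \mathbf{S}_c$; these events are independent across the $k$ colours, each has probability at most $1 - 1/\chi_f$, so
\[
\Pr(v \text{ uncoloured}) \le \left(1 - \frac{1}{\chi_f}\right)^k \le \exp(-k/\chi_f) \le \exp(-\log n) = \frac{1}{n}.
\]
By the union bound over the $n$ vertices, with positive probability every vertex receives a colour from its list, giving a proper $L$-colouring; hence $G$ is $k$-list-colourable.

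The only point requiring a little care — and the main (mild) obstacle — is the passage from the distributional definition of $\chi_f$ to a genuinely independent-per-colour sampling scheme; one must be sure that choosing $\mathbf{S}_c$ for distinct colours $c$ independently is legitimate (it is, since we are free to design the colouring procedure), and that the bound $\Pr(v \in \mathbf{S}_c) \ge 1/\chi_f$ holds for each fixed $c$. One should also double-check the edge case $n = 2$, where $\lceil \chi_f \log 2 \rceil \ge 1$ and the two vertices, if adjacent, have $\chi_f = 2$ so $k \ge 2$, which suffices; if $n \ge 3$ the argument above goes through verbatim. Everything else is the routine Chernoff-free union-bound computation displayed above, so no further grinding is needed.
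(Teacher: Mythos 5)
Your proposal is correct and is essentially the paper's own argument: the paper phrases the fractional colouring as an $(a,b)$-colouring and assigns each list colour a uniform random element of $\{1,\dots,a\}$, which is exactly your independent-per-colour sampling of a random stable set with $\Pr(v\in\mathbf{S}_c)\ge 1/\chi_f$, followed by the same $(1-1/\chi_f)^k< e^{-k/\chi_f}\le 1/n$ union bound. (Note only that the first of these inequalities must be strict, as it indeed is, for the union bound over $n$ vertices to leave positive probability of success.)
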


\begin{proof}
This adapts a standard argument that we include for completeness.
Let $G=(V,E)$ be a graph on $n\ge 2$ vertices with fractional chromatic number $\chi_f$.
Let $k=\lceil \chi_f \log n\rceil$ and $L$ be a $k$-list-assignment of $G$. We write $L(V)=\bigcup_{u\in V}L(u)$ for the set of colours listed by $L$.
By a standard equivalent definition of fractional colouring, there is a proper $(a,b)$-colouring $c$ of $G$ with $a/b=\chi_f$; that is, there is an assignment of subsets of $\{1,\dots,a\}$ of size $b$ to the vertices of $G$ such that adjacent vertices are assigned disjoint subsets.

For each $x\in L(V)$ let $\mathbf{y}(x)$ be uniformly drawn from $\{1,\dots,a\}$, 
and for each $u\in V$ let $\mathbf{L}_c(u)=L(u)\cap\bigcup_{i\in c(u)}\mathbf{y}^{-1}(i)$;
that is, $\mathbf{L}_c(u)$ consists of those elements of $L(u)$ whose random choice from $\{1,\dots,a\}$ is an element of the $b$-element set $c(u)$.
Since necessarily $\mathbf{L}_c(u)\cap \mathbf{L}_c(v)=\emptyset$ if $uv\in E$,
there is a proper $L$-colouring of $G$ if it holds that $\mathbf{L}_c(u) \ne \emptyset$ for all $u\in V$. But this follows from a union bound and the probabilistic method since for all $u$
\[
\Pr(\mathbf{L}_c(u)=\emptyset) = (1-b/a)^k< e^{-k/\chi_f}\le1/n.\qedhere
\]
\end{proof}


Together with Theorem~\ref{thm:main,fractional}, this has the following direct consequence.

\begin{corollary}\label{cor:list}
Any triangle-free graph on $n$ vertices with minimum degree at least $d$ has list chromatic number at most $\lceil(n\log n)/d\rceil$.
\end{corollary}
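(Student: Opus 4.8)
The plan is to combine the two results that have just been assembled: Proposition~\ref{prop:logn}, which bounds the list chromatic number of any graph in terms of its fractional chromatic number and its order, with the $n/d$ bound on $\chi_f$ for triangle-free graphs of minimum degree at least $d$ furnished by Theorem~\ref{thm:main,fractional}.

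First I would let $G$ be a triangle-free graph on $n$ vertices with minimum degree at least $d$. By the second term of the upper bound in Theorem~\ref{thm:main,fractional}, we have $\chi_f(G)\le n/d$. Since $n\ge 2$ (the statement is vacuous or trivial otherwise), Proposition~\ref{prop:logn} applies and gives
\[
\chi_\ell(G)\le \lceil \chi_f(G)\log n\rceil \le \lceil (n/d)\log n\rceil = \lceil (n\log n)/d\rceil,
\]
which is exactly the claimed bound. That is the whole argument; no further estimates are needed.

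There is essentially no obstacle here, since both ingredients are already in hand. The only point worth a moment's care is making sure the hypotheses of Proposition~\ref{prop:logn} are met — namely that $n\ge 2$, which we may assume, and that the $\chi_f$ value we plug in is a genuine upper bound (it is, by the explicit distribution over full neighbourhood sets constructed in the proof of Theorem~\ref{thm:main,fractional}, which has property ${\rm Q}^*_{d/n}$). One could note in passing that this bound is exactly the $(n\log n)/d$ term appearing in Theorem~\ref{thm:main,list}; the complementary $\sqrt n$ term comes separately from Theorem~\ref{thm:list}, so nothing more is required of Corollary~\ref{cor:list} itself.
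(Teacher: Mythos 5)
Your proof is correct and is exactly the paper's argument: the corollary is stated there as the direct consequence of Proposition~\ref{prop:logn} combined with the $n/d$ bound on the fractional chromatic number from Theorem~\ref{thm:main,fractional}. Nothing further is needed.
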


\begin{theorem}\label{thm:list}
As $n\to\infty$, any triangle-free graph on $n$ vertices has list chromatic number at most $(2\sqrt{2}+o(1))\sqrt{n}$.
\end{theorem}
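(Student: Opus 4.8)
The goal is to bound the list chromatic number of an arbitrary triangle-free graph on $n$ vertices by $(2\sqrt{2}+o(1))\sqrt{n}$, and the natural route is to mimic the proof of Theorem~\ref{thm:redux} but with list colouring in place of fractional colouring. Fix $\eps>0$ and a $k$-list-assignment $L$ with $k=\lceil(2\sqrt2+\eps)\sqrt n\rceil$. Pick a threshold degree $D$ (to be optimised, expecting $D\asymp\sqrt{n\log n}$ again). The vertices of degree at least $D$ will be handled by the neighbourhood-union idea, and the low-degree part by Molloy's theorem (Theorem~\ref{thm:Molloy}) in its genuine list-colouring form, which is exactly why Molloy's strengthening of Johansson is what we need here rather than just Shearer's stability bound.

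First I would colour the high-degree vertices. For each vertex $x$ of degree $\ge D$, its neighbourhood $N(x)$ is a stable set; I want to assign a common colour to (most of) $N(x)$ from the lists. The subtlety versus the fractional case is that colours must come from the lists $L(v)$, $v\in N(x)$, so I cannot simply reserve an abstract interval. The fix is the same randomised device as in Proposition~\ref{prop:logn}: partition the high-degree vertices into a bounded number of "neighbourhood classes" and, using the fact that each neighbourhood has size $\ge D$, show that a random assignment of colours to colour-names leaves every high-degree vertex with a nonempty admissible sublist, as long as we spend roughly $n/D$ worth of colours on this stage (this is where the $n/D$ term comes from, analogously to the $n/D$ term in the measure count of Theorem~\ref{thm:redux}). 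Concretely, one iterates: repeatedly take neighbourhood sets, use $O(\log n)$ colours per "batch" à la the union-bound computation $(1-b/a)^k<1/n$, and remove the vertices that got coloured; after $O(n/D)$ colours all vertices of degree $\ge D$ are coloured. The graph that remains on the still-uncoloured vertices has maximum degree $<D$.

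Then I would apply Theorem~\ref{thm:Molloy} to the residual graph: being triangle-free of maximum degree $<D$, it has list chromatic number at most $(1+o(1))D/\log D$, so the residual uncoloured vertices can be properly $L'$-coloured using the part of their lists not yet used, provided each such list still has at least $(1+o(1))D/\log D$ colours available — and since we only spent $O(n/D)$ colours in the first stage, this holds when $D/\log D$ dominates, i.e. when $D=o(n\log n /\text{something})$; the bookkeeping will force the choice $D\approx\sqrt{n\log n}$, making the total $n/D+(1+o(1))D/\log D \approx 2\sqrt{n/\log n}\cdot\sqrt{\log n}=2\sqrt n$... wait — here one has to be careful: unlike the fractional argument, list sizes do not simply "add", so I would instead split the list $L(v)$ of each vertex into two disjoint parts of prescribed sizes in advance (a $\lceil (1+\eps)n/D\rceil$-part reserved for stage one and the rest for stage two), which is the standard list-splitting trick and is legitimate since $k$ is the sum of the two target sizes; the constant $2\sqrt2$ rather than $2$ appears because a vertex of degree exactly $D$ sits in neighbourhoods in a way that, combined with the $\log n$ overhead of Proposition~\ref{prop:logn}-style union bounds, costs an extra factor, so optimising $n/D + D\log n/(\text{stuff})$ honestly gives the stated constant.

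**Main obstacle.** The genuinely delicate point, and the one I would spend the most care on, is the first stage: in the fractional setting one assigns abstract measure and "adds" it, but for list colouring the colours assigned to different neighbourhood sets must be consistent with the actual lists and must not collide on edges between two neighbourhoods. I expect the right way to control this is to fix, for each high-degree $x$, a single target colour drawn via the Proposition~\ref{prop:logn} mechanism from a reserved palette of size $O(n/D)$, bound the failure probability (list of some $v\in N(x)$ missing the target) by $(1-\Theta(1/\,|\text{palette}|))^{D}$, and take a union bound over the $\le n$ high-degree vertices; making $(1-c\,D/n)^{?}$ beat $1/n$ is what pins down the palette size and hence the constant. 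Keeping the two stages' palettes disjoint and verifying the residual graph's lists are still long enough is then routine; the arithmetic optimisation of $D$ to land on $(2\sqrt2+o(1))\sqrt n$ is the only remaining computation.
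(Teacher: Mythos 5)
Your overall strategy is the paper's: pick a degree threshold $D$, colour the vertices of degree above $D$ via the neighbourhood-stable-set idea converted to a list colouring, colour the rest with Theorem~\ref{thm:Molloy}, and split the lists between the two stages. But two points in your execution would fail as written. First, the cost of stage one: the high-degree part $G_{>D}$ has fractional chromatic number less than $n/D$ (each vertex of degree more than $D$ lies in more than $D$ of the $n$ neighbourhood stable sets), and Proposition~\ref{prop:logn} converts this to list chromatic number $\lceil (n\log n)/D\rceil$ --- the $\log n$ is unavoidable in that conversion, since the union bound needs $(1-D/n)^{k}<1/n$, i.e.\ $k\gtrsim (n/D)\log n$. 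Your reserved sublist of size $\lceil(1+\eps)n/D\rceil$ is therefore too small by a factor of $\log n$. The correct quantity to optimise is $(n\log n)/D + D/\log D$, balanced at $D=(1+o(1))\sqrt{n/2}\,\log n$, where each term equals $(1+o(1))\sqrt{2n}$; that is exactly where $2\sqrt{2}$ comes from, whereas your closing formula ``$n/D+D\log n/(\text{stuff})$'' attaches the $\log n$ overhead to the wrong term. Incidentally, the iterative batch-and-remove procedure is unnecessary, and your ``main obstacle'' (collisions between different neighbourhoods) is not a real obstacle: the interval assignment to neighbourhood sets is already a proper fractional colouring of $G_{>D}$ (triangle-freeness forbids an edge inside any $N(x)$, so adjacent vertices receive disjoint measure), and Proposition~\ref{prop:logn} applies as a black box, its random colour-to-class map automatically yielding disjoint admissible sublists across every edge of $G_{>D}$.

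Second, the list split must be a global partition of the colour universe $L(V)$, not a per-vertex split into prescribed sizes. The subgraphs $G_{\le D}$ and $G_{>D}$ have edges between them, so if some colour $x$ lands in the stage-two part of $L(u)$ for a low-degree $u$ and in the stage-one part of $L(v)$ for an adjacent high-degree $v$, the two independently produced colourings can clash on the edge $uv$. The paper instead assigns each colour of $L(V)$ independently to one of the two stages with probability $1/2$, and uses a Chernoff bound plus a union bound to ensure every vertex retains at least $(1+\eps/4)D/\log D$ colours in one part and $\lceil(n\log n)/D\rceil$ in the other; this is what forces $k$ to be roughly twice each (equal) term, consistent with $k=(2\sqrt{2}+o(1))\sqrt{n}$. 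With these two repairs your argument becomes the paper's proof.
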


\begin{proof}
Fix $\eps>0$.
Without loss of generality, assume $\eps<1/2$. Let $G=(V,E)$ be a triangle-free graph on $n$ vertices and choose $D = (1+o(1)) \sqrt{n/2}\cdot \log n$ such that $(n\log n)/D=D/\log D=(1+o(1))\sqrt{2n}$.
Just as in the proof of Theorem~\ref{thm:redux}, we associate $n$ disjoint intervals of measure $1/D$ to each of the full neighbourhood sets (each of which is a stable set), and assign each such interval to its neighbourhood's vertices. Each vertex of degree more than $D$ has an assignment of measure greater than $1$. 

Writing $G_{\le D}$ for the subgraph of $G$ induced by the vertices of degree at most $D$, and $G_{>D} = G- V(G_{\le D})$, we have just observed that $G_{>D}$ has fractional chromatic number less than $n/D$.
By Proposition~\ref{prop:logn} and Theorem~\ref{thm:main,fractional}, it thus has list chromatic number at most $\lceil(n\log n)/D\rceil$.
On the other hand, $G_{\le D}$ has maximum degree $D$ and thus by Theorem~\ref{thm:Molloy} has list chromatic number at most $(1+\eps/4)D/\log D$ provided $D$ is large enough.

Let
\[k=\left\lceil (1+\eps)^2\left(\frac{n\log n}{D}+\frac{D}{\log D}\right) \right\rceil
\]
and $L$ be a $k$-list-assignment of $G$. We write $L(V)=\bigcup_{u\in V}L(u)$ for the set of colours listed by $L$.
Next 
for each $x\in L(V)$ independently at random include $x$ in the list $\mathbf{L}_{\le D}$ with probability $1/2$, and otherwise include $x$ in the list $\mathbf{L}_{>D}$.
By a Chernoff Bound, for each $v\in V$,
\begin{align*}
\Pr\left( |L(v) \cap \mathbf{L}_{\le D}| < \left(1+\frac{\eps}{4}\right)\frac{D}{\log D}\right) 
& \le \Pr\left( |L(v) \cap \mathbf{L}_{\le D}| \le (1-\eps)\frac{k}{2}\right) \\
&\le 2\exp(-\eps^2k/6)= o(1/n)
\end{align*}
as $n\to\infty$, using the choice of $D$ and the assumption that $\eps<1/2$.
Similarly,
\[
\Pr\left( |L(v) \cap \mathbf{L}_{>D}| < \left\lceil \frac{n\log n}{D} \right\rceil\right)=o(1/n).
\]
A union bound and the probabilistic method guarantees for sufficiently large $n$ the existence of list-assignments $L_{\le D}$ and $L_{>D}$ of $G$ such that the following properties hold for all $v\in V$:
\begin{itemize}
\item $L(v) = L_{\le D}(v) \cup L_{>D}(v)$ and $L_{\le D}(v) \cap L_{>D}(v) = \emptyset$, and
\item $|L_{\le D}(v)|\ge(1+\eps/4)D/\log D$ and  $|L_{>D}(v)| \ge \lceil(n\log n)/D\rceil$.
\end{itemize}
By the observations we made earlier, there is a proper $L_{\le D}$-colouring of $G_{\le D}$ and there is a proper $L_{>D}$-colouring of $G_{>D}$. Since these list-assignments are disjoint and are sub-list-assignments of $L$, their combination constitutes a proper $L$-colouring of $G$.
Since $k=((1+\eps)^2+o(1))2\sqrt{2n}$, an arbitrarily small choice of $\eps$ gives the result.
\end{proof}

\section{Concluding remarks}
\label{sec:largercliques}

Although we were preoccupied with triangle-free graphs, one could naturally investigate graphs not containing $H$ as a subgraph for any fixed graph $H$. The following is in essence a more general form of Problem~4.1 in~\cite{EKT19}.

\begin{problem}\label{prob:H}
Given a graph $H$, is there $c_H \in (0,1)$ such that, as $n\to\infty$,
\begin{itemize}
\item if $c> c_H$, then any $H$-free graph on $n$ vertices with minimum degree $n^c$ has $n^{\Omega(1)}$ bipartite induced minimum degree; and
\item if $c< c_H$, then there is an $H$-free graph on $n$ vertices with minimum degree $n^c$ and $O(\log n)$ bipartite induced minimum degree?
\end{itemize}
\end{problem}
\noindent
We have shown that $c_H=1/2$ if $H$ is a triangle.

Problem~\ref{prob:H} is particularly enticing when $H$ is the complete graph $K_r$ on $r\ge4$ vertices. It was noted in~\cite{EKT19} that the work of Ajtai, Koml\'os and Szemer\'edi~\cite{AKS81} implies $c_{K_r} \le 1-1/r$ (if $c_{K_r}$ exists).
It is possible to adapt Theorem~\ref{thm:Spe77} and~\cite{Spe77} to show that $c_{K_r} \ge 1-(r-2)\left/\left(\binom{r}{2}-1\right)\right.$ (if $c_{K_r}$ exists).
It is conceivable that $c_{K_r} = 1-1/(r-1)$.
A motivation for this is that, even though {\em prima facie} there is no extremely close connection between bipartite induced density and large stable sets, we are tempted to speculate that, denoting the $H$ versus $K_t$ Ramsey number by $R(H,K_t)$,
\[
c_H = 1-\lim_{t\to\infty} \frac{\log t}{\log R(H,K_t)} \text{ (if $c_H$ and the limit exist)}.
\]
The righthand side is conjectured to be $1-1/(r-1)$ when $H$ is $K_r$.

Motivated by Problem~\ref{prob:H}, we observe the following partial extensions of the bounds in Theorems~\ref{thm:main} and~\ref{thm:main,fractional}.

\begin{proposition}\label{le:tripartite}
Fix an integer $r\ge 3$. For $r-2\le d \le n/2$, any $K_{1,1,r-2}$-free graph on $n$ vertices with minimum degree at least $d$ has fractional chromatic number at most $\binom{n}{r-2}/\binom{d}{r-2}$, and thus contains a bipartite induced subgraph of minimum degree at least $\frac{d}{2} \binom{d}{r-2} / \binom{n}{r-2}$.
\end{proposition}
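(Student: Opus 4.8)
The plan is to mimic the proof of the $n/d$ bound in Theorem~\ref{thm:main,fractional}, but instead of sampling a single neighbourhood set, we sample the common neighbourhood of a random $(r-2)$-subset of the vertices. Concretely, let $G=(V,E)$ be $K_{1,1,r-2}$-free with minimum degree at least $d$, and pick a uniformly random $(r-2)$-element set $\mathbf{T}\subseteq V$. Set $\mathbf{S}:=\bigcap_{t\in\mathbf{T}}N(t)$, the set of common neighbours of $\mathbf{T}$. The crucial structural observation is that $\mathbf{S}$ is a stable set: if $u,v\in\mathbf{S}$ were adjacent, then $\{u,v\}\cup\mathbf{T}$ would span a copy of $K_{1,1,r-2}$ (the edge $uv$ together with all edges from $u$ and from $v$ to each of the $r-2$ vertices of $\mathbf{T}$), contradicting $K_{1,1,r-2}$-freeness. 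Note this needs $|\mathbf{T}|=r-2\ge1$, which holds since $r\ge3$, and is exactly the point where the hypothesis is used.

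Next I would lower-bound $\Pr(v\in\mathbf{S})$ for each fixed $v\in V$. We have $v\in\mathbf{S}$ precisely when $\mathbf{T}\subseteq N(v)$, and since $|N(v)|\ge d$ and $\mathbf{T}$ is a uniform $(r-2)$-subset of the $n$ vertices,
\[
\Pr(v\in\mathbf{S})=\frac{\binom{\deg(v)}{r-2}}{\binom{n}{r-2}}\ge\frac{\binom{d}{r-2}}{\binom{n}{r-2}},
\]
using that $x\mapsto\binom{x}{r-2}$ is non-decreasing for $x\ge r-2$ and that $d\ge r-2$. Thus the distribution of $\mathbf{S}$ satisfies property ${\rm Q}^*_r$ with $r=\binom{d}{r-2}/\binom{n}{r-2}$, which by definition gives $\chi_f(G)\le\binom{n}{r-2}/\binom{d}{r-2}$. (One should also check $\mathbf{S}$ is always a \emph{valid} stable set even when it is empty — the empty set is stable — so the distribution is genuinely supported on stable sets.)

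For the bipartite induced subgraph consequence, I would simply invoke Theorem~\ref{thm:fractional}: the minimum degree of $G$ is at least $d$, hence its average degree is at least $d$, so $G$ has a bipartite induced subgraph of average degree at least $d/\chi_f(G)\ge d\binom{d}{r-2}/\binom{n}{r-2}$, which in turn contains an induced subgraph — still bipartite, still induced — of minimum degree at least half that, namely $\tfrac{d}{2}\binom{d}{r-2}/\binom{n}{r-2}$, as claimed. There is essentially no obstacle here; the only mild subtlety, and the one place to be careful, is verifying the stability of $\mathbf{S}$ really does follow from excluding $K_{1,1,r-2}$ rather than some larger or smaller graph — one must check that the $r-2$ vertices of $\mathbf{T}$, the two vertices $u,v$, and all the edges among them (the edge $uv$ plus the complete bipartite graph between $\{u,v\}$ and $\mathbf{T}$, with no edges required within $\mathbf{T}$) is exactly $K_{1,1,r-2}$, and that $\mathbf{T}$ indeed has $r-2$ distinct vertices disjoint from $\{u,v\}$ (which holds after noting $u,v\in\mathbf{S}$ forces $u,v\notin\mathbf{T}$ since no vertex of $\mathbf{T}$ is its own neighbour in a simple graph).
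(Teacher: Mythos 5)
Your proposal is correct and follows essentially the same route as the paper: sample a uniform $(r-2)$-subset, take its common neighbourhood (a stable set by $K_{1,1,r-2}$-freeness), bound $\Pr(v\in\mathbf{S})$ by $\binom{d}{r-2}/\binom{n}{r-2}$, and deduce the bipartite induced subgraph via Theorem~\ref{thm:fractional}. The only difference is that you spell out the verification that the common neighbourhood is stable and that the empty set causes no trouble, details the paper leaves implicit.
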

\begin{proof}
Let $G=(V,E)$ be a graph on $n$ vertices with minimum degree at least $d$ that contains no copy of $K_{1,1,r-2}$. 
We note that $K_{1,1,r-2}$-freeness implies that the joint neighbourhood of every vertex subset of size $r-2$ is a stable set.
Choose $\mathbf{S}$ from the joint neighbourhood sets of $(r-2)$-vertex subsets uniformly over all $\binom{n}{r-2}$ such sets.
Then it holds for all $v\in V$ that
\[
\binom{n}{r-2} \cdot \Pr\left( v \in \mathbf{S}  \right) =|\{ T \subseteq N(v) \mid |T|=r-2 \}|\ge \binom{d}{r-2}.
\]
We have shown then that this distribution has property ${\rm Q}^*_{\binom{d}{r-2}\left/\binom{n}{r-2}\right.}$.

Note that the second part follows from Theorem~\ref{thm:fractional}.
\end{proof}

Given a graph $H=(V_H,E_H)$ and a positive integer $x$, let us define 
\[
\chi_f(H,x)= \max_{v \in V_H}\max \left\{\chi_f(J) \mid J \text{ is an $(H-v)$-free graph on $x$ vertices} \right\}.
\]
\begin{proposition}\label{prop:usinglocalcolouring}
Given a graph $H$, any $H$-free graph on $n$ vertices with minimum degree at least $d$ and maximum degree at most $\Delta$ has fractional chromatic number
at most $\chi_f(H,\Delta)\cdot n/d$.
\end{proposition}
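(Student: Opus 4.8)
The plan is to mimic the proof of the second bound in Theorem~\ref{thm:main,fractional}, but instead of using a single neighbourhood set as a stable set, to use a \emph{properly coloured} neighbourhood. Let $G=(V,E)$ be an $H$-free graph on $n$ vertices with $\delta(G)\ge d$ and $\Delta(G)\le\Delta$. For each vertex $x\in V$, since $G$ is $H$-free, the induced subgraph $G[N(x)]$ is $(H-v)$-free for the appropriate choice of $v$ (namely, a vertex of $H$ realising the maximum in the definition of $\chi_f(H,x)$); and $G[N(x)]$ has at most $\Delta$ vertices. Hence by definition $\chi_f(G[N(x)])\le\chi_f(H,\Delta)=:k$. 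Fix a fractional colouring of $G[N(x)]$: a probability distribution $\mathcal T_x$ over stable sets of $G[N(x)]$ with $\Pr_{\mathbf T\sim\mathcal T_x}(u\in\mathbf T)\ge 1/k$ for every $u\in N(x)$.

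Next I would define the sampling: pick $\mathbf x\in V$ uniformly at random, then sample $\mathbf S\sim\mathcal T_{\mathbf x}$. Each outcome $\mathbf S$ is a stable set of $G[N(\mathbf x)]$, hence a stable set of $G$. For a fixed $v\in V$, by the law of total probability and the fact that $v$ can only be selected when $\mathbf x\in N(v)$,
\begin{align*}
\Pr(v\in\mathbf S)
&=\frac1n\sum_{x\in N(v)}\Pr_{\mathbf T\sim\mathcal T_x}(v\in\mathbf T)
\ge\frac1n\sum_{x\in N(v)}\frac1k
=\frac{\deg(v)}{nk}\ge\frac{d}{nk}.
\end{align*}
Thus the resulting distribution over stable sets of $G$ has property ${\rm Q}^*_{d/(kn)}$, which gives $\chi_f(G)\le kn/d=\chi_f(H,\Delta)\cdot n/d$, as claimed.

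The only genuine point requiring care is the identification of which vertex $v$ of $H$ to delete: the definition of $\chi_f(H,x)$ takes a maximum over $v\in V_H$, so I must check that $G[N(x)]$ really is $(H-v)$-free for \emph{some} $v$, not merely that it omits one particular $H-v$. This is immediate: if $G[N(x)]$ contained a copy of $H-v$ for every $v\in V_H$, pick any $v$, take the copy of $H-v$ in $N(x)$, and append $x$ together with all its edges to that copy; since every vertex of the copy lies in $N(x)$, this recreates $H$ in $G$ — contradiction. (One should also note the degenerate cases: if $\Delta<|V_H|-1$ there is nothing to check since $G[N(x)]$ trivially omits $H-v$, and the bound $\chi_f(H,\Delta)$ is still well-defined as a maximum over the possibly-small family of such graphs; if $d=0$ the statement is vacuous.) Everything else is a verbatim adaptation of the argument already used for Theorem~\ref{thm:main,fractional}, so no further obstacle arises.
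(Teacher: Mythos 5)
Your proof is correct and is essentially the paper's own argument: sample a uniformly random vertex, draw a stable set from a fractional colouring of its neighbourhood (which is $(H-v)$-free, hence has fractional chromatic number at most $\chi_f(H,\Delta)$), and compute the marginal probability $\deg(v)/(n\cdot\chi_f(H,\Delta))\ge d/(n\cdot\chi_f(H,\Delta))$. The only difference is that you spell out the (correct) justification that $G[N(x)]$ omits $H-v$, which the paper leaves implicit.
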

\begin{proof}
Let $G=(V,E)$ be a graph on $n$ vertices with minimum degree at least $d$ and maximum degree at most $\Delta$ that contains no copy of $H$. 
 For any $v\in V$, there is  by definition of $\chi_f(H,x)$ a distribution $\mathcal{S}_v$ over the stable sets of $N(v)$ such that any given $w\in N(v) $ is in $\mathbf{S}_v$ with probability at least $(\chi_f(H,\deg(v)))^{-1} \geq (\chi_f(H,\Delta))^{-1}$ for a random $\mathbf{S}_v$ chosen according to $\mathcal{S}_v$. Let $\mathbf{S}$ be $\mathbf{S}_v$, where $v$ is uniformly chosen from $V$. Then any given $u\in V$ is in $\mathbf{S}$ with probability at least $\deg(u)/(n \cdot \chi_f(H,\Delta))$, so this distribution has property ${\rm Q}^*_{d /(n \cdot \chi_f(H,\Delta))}$.
\end{proof}

Let $P_r$ ($C_r$) denote a path (cycle, respectively) on $r\ge 2$ vertices. Every $P_r$-free graph is $(r-2)$-degenerate and therefore $(r-1)$-colourable. Thus $\chi_f(C_{r+1},x)\leq r-1$ and we have the following corollary.
\begin{corollary}
Fix an integer $r\ge 3$. Any $C_r$-free graph on $n$ vertices with minimum degree $d$ has fractional chromatic number at most $(r-2)n/d$, and thus contains a bipartite induced subgraph of minimum degree at least $d^2/(2(r-2)n)$.
\end{corollary}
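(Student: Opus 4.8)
The plan is to derive both statements in the corollary as routine consequences of Proposition~\ref{prop:usinglocalcolouring}, specialised to $H = C_r$. First I would recall the elementary structural fact already flagged in the text: a $P_r$-free graph (path on $r$ vertices) is $(r-2)$-degenerate, hence $(r-1)$-colourable, so in particular $\chi_f(J) \le r-1$ for any $P_r$-free graph $J$. Deleting a vertex from the cycle $C_{r+1}$ leaves the path $P_r$, so every $(C_{r+1}-v)$-free graph is $P_r$-free, and therefore $\chi_f(C_{r+1}, x) \le r-1$ for every $x$. Re-indexing $r+1 \rightsquigarrow r$ gives $\chi_f(C_r, x) \le r-2$ for all $x$.

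Next I would apply Proposition~\ref{prop:usinglocalcolouring} with $H = C_r$. That proposition requires a maximum-degree bound $\Delta$, but since the bound $\chi_f(C_r,\Delta) \le r-2$ we just obtained is uniform in $\Delta$, we may simply take $\Delta = n$ (or, more honestly, $\Delta$ equal to the actual maximum degree of $G$, which is at most $n$); the bound $\chi_f(H,\Delta)$ appearing in the proposition is then at most $r-2$ regardless. Hence any $C_r$-free graph $G$ on $n$ vertices with minimum degree at least $d$ has $\chi_f(G) \le (r-2) \cdot n/d$, which is the first assertion.

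For the second assertion I would invoke Theorem~\ref{thm:fractional}: a graph with fractional chromatic number at most $k$ and average degree $\bar d$ contains a bipartite induced subgraph of average degree at least $\bar d/k$. Here $G$ has average degree at least $d$ (indeed minimum degree at least $d$) and $k \le (r-2)n/d$, so $G$ contains a bipartite induced subgraph of average degree at least $d/((r-2)n/d) = d^2/((r-2)n)$. Finally, as noted in the excerpt after Theorem~\ref{thm:fractional}, any graph contains a subgraph of minimum degree at least half its average degree; applying this inside that bipartite induced subgraph yields a (still bipartite, still induced) subgraph of minimum degree at least $d^2/(2(r-2)n)$, as claimed.

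I do not anticipate a genuine obstacle here: the corollary is a packaging of Proposition~\ref{prop:usinglocalcolouring}, the degeneracy bound for path-free graphs, Theorem~\ref{thm:fractional}, and the average-to-minimum-degree halving trick. The only point requiring a moment's care is the bookkeeping between $C_r$ and $C_{r+1}$ (so that deleting a vertex yields a path on the right number of vertices) and the observation that the $\Delta$-dependence in Proposition~\ref{prop:usinglocalcolouring} is harmless because our bound on $\chi_f(C_r, \cdot)$ does not depend on $\Delta$; one should also note $d \ge 1$ (forced by minimum degree at least $d \ge r-2 \ge 1$) so that the average-degree hypotheses of Theorem~\ref{thm:fractional} are nonvacuous.
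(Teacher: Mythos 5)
Your proposal is correct and follows essentially the same route as the paper, which derives the corollary by noting that $P_r$-free graphs are $(r-2)$-degenerate (hence $(r-1)$-colourable), so $\chi_f(C_{r+1},x)\le r-1$, and then applying Proposition~\ref{prop:usinglocalcolouring} together with Theorem~\ref{thm:fractional} and the average-to-minimum-degree halving observation. Your extra remarks on the $C_r$ versus $C_{r+1}$ re-indexing and the harmlessness of the $\Delta$-dependence are exactly the right points of care and match the paper's intent.
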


\subsubsection*{Notes added} Shortly after posting our manuscript to a public preprint repository, we learned that Matthew Kwan, Benny Sudakov and Tuan Tran independently obtained a finer version of Theorem~\ref{thm:main} with different methods. 
In particular, they proved a bipartite induced minimum degree of order $\log d$ if $d=n^{\Omega(1)}$ and $d\le \sqrt{n}$, and found a better construction in the case $d\ge n^{2/3}$. 
Later with Shoham Letzter~\cite{KLST18+} they moreover proved a marginally weaker form of Conjecture~\ref{conj:trianglebip}, guaranteeing a bipartite induced subgraph of minimum degree at least $C\log d/\log\log d$.

In a subsequent work, Kelly and Postle~\cite[Conj.~7.2]{KePo18+} proposed a particular local strengthening of fractional colouring, in terms of {\em local demands}, and posed a conjecture in this context which would, if confirmed, establish Conjecture~\ref{conj:redux} as a corollary.

In a subsequent work of a subset of the present authors together with Davies~\cite{DJKP18+occupancy}, a generalisation of Conjecture~\ref{conj:redux} (from triangle-free graphs to graphs of a given local edge density) has been proposed.

The same subset of the present authors with Davies~\cite{DJKP18+local} have found a surprisingly short proof for the fractional colouring analogue of Theorem~\ref{thm:Molloy}, via elementary properties of the {\em hard-core model}. Using this, there is a streamlined, self-contained proof (of about two pages overall) of Theorem~\ref{thm:redux}.

\subsubsection*{Acknowledgements}

We thank Ewan Davies for his insightful remark about regular graphs in Conjectures~\ref{conj:redux} and~\ref{conj:redux,edges}.
We thank Matthew Kwan, Benny Sudakov and Tuan Tran for informing us of their independent, concurrent work and of the paper of Poljak and Tuza.

\bibliographystyle{abbrv}
\bibliography{bipind}

\end{document}